\numberwithin{equation}{section}
\newtheorem{theorem}{Theorem}[section]
\newtheorem{corollary}[theorem]{Corollary}
\newtheorem{lemma}[theorem]{Lemma}
\newtheorem{proposition}[theorem]{Proposition}
\newtheorem{definition}[theorem]{Definition}
\newtheorem{remark}[theorem]{Remark}
\newtheorem{example}[theorem]{Example}
\def\Ext{\operatorname {Ext}}
\def\deg{\operatorname {deg}}
\def\GrAut{\operatorname {GrAut}}
\def\H{\operatorname {H}}
\def\hdet{\operatorname {hdet}}
\def\det{\operatorname {det}}
\def\deg{\operatorname {deg}}
\def\inj{\operatorname {inj}}
\def\gldim{\operatorname {gldim}}
\def\id{\operatorname {id}}
\newcommand{\rmnum}[1]{\romannumeral #1}
\title[Nakayama automorphisms of double Ore extensions of Koszul regular algebras]
{\bf Nakayama automorphisms of double Ore extensions of Koszul regular algebras}
\author{Can  Zhu}
\address{C. Zhu\newline College of Science, University of Shanghai for Science and Technology, Shanghai
200093, China}
\email{czhu@usst.edu.cn}
\author{Fred Van Oystaeyen}
\address{F. Van Oystaeyen\newline Department of Mathematics and Computer Science, University of Antwerp, Middelheimlaan 1,
B-2020 Antwerp, Belgium} \email{fred.vanoystaeyen@ua.ac.be}
\author{Yinhuo Zhang}
\address{Y. Zhang\newline Department of mathematics and Statistics, University of Hasselt, 3590 Diepeenbeek, Belgium} \email{yinhuo.zhang@uhasselt.be}
\date{}
\begin{document}
\begin{abstract} Let $A$ be a Koszul Artin-Schelter regular algebra and $\sigma$  an algebra homomorphism from $A$ to $M_{2\times 2}(A)$. We compute  the Nakayama automorphisms of a trimmed double Ore extension $A_P[y_1, y_2; \sigma]$ (introduced in \cite{ZZ08}). Using a similar method, we also obtain the Nakayama automorphism of a skew polynomial extension $A[t; \theta]$,  where $\theta$ is  a graded algebra automorphism of $A$. These lead to a characterization of the Calabi-Yau property of  $A_P[y_1, y_2; \sigma]$, the skew Laurent extension $A[t^{\pm 1}; \theta]$ and $A[y_1^{\pm 1}, y_2^{\pm 1}; \sigma]$ with $\sigma$ a diagonal type.
\end{abstract}

\subjclass[2010]{Primary 16E65, 16S36, 16U20}


\keywords{Koszul algebra, skew polynomial extension, double Ore extension, skew Laurent extension, Nakayama automorphism, Calabi-Yau algebra}

\maketitle

\section*{Introduction}

Nakayama automorphisms play an important role in  noncommutative algebraic geometry especially in noncommutative invariant theory \cite{CWZ14, LMZ14, RRZ14}.
Let $A$ be a Koszul Artin-Schelter regular algebra with  Nakayama automorphism $\nu$ in the sense of \cite{BZ08}. The Nakayama automorphism and Calabi-Yau property of Ore extensions and of skew polynomial extensions were studied in \cite{LWW12, BOZZ13, HVZ13, GK14, GYZ14, RRZ14}.  In this paper, we
compute  the Nakayama automorphisms of certain double Ore extension $A_P[y_1, y_2; \sigma]$ of $A$; the general notion of a double Ore extension was introduced by Zhang and Zhang in \cite{ZZ08}. Then we study the Calabi-Yau property of $A_P[y_1, y_2; \sigma]$,  a skew Laurent extension $A[t^{\pm 1}; \theta]$, where $\theta\in \GrAut(A)$, and $A[y_1^{\pm 1}, y_2^{\pm 1}; \sigma]$ with $\sigma$ a diagonal type.

It is well-known that a graded Ore extension of a Koszul algebra is also Koszul (see \cite[Corollary 1.3]{Ph12} for example). For a Koszul Artin-Schelter regular algebra, Van den Bergh proposed an effective method to compute the Nakayama automorphism through the Yoneda Ext algebra (see Proposition \ref{criteria} or \cite[Theorem 9.2]{VdB97}).  Inspired by these two facts, we first show the following:

\noindent {\bf Theorem 1.} (Theorem \ref{koszulity}) \emph{
Let $A$ be a Koszul algebra and  $B=A_P[y_1, y_2; \sigma]$ be a trimmed double Ore extension of $A$. Then, $B$ is a Koszul algebra.}

By describing the Yoneda Ext algebra, we are able to compute the Nakayama automorphism of a trimmed double Ore extension of a Koszul Artin-Schelter regular algebra.

\noindent {\bf Theorem 2.} (Proposition \ref{nakaab} and Theorem \ref{nakab}) \emph{
Let $A$ be a Koszul Artin-Schelter regular algebra with Nakayama automorphism $\nu$, and  $B=A_P[y_1, y_2; \sigma]$ a trimmed double Ore extension of $A$. Then,
\begin{enumerate}
\item The restriction of the Nakayama automorphism $\nu_{B}$ of
$B$ to $A$ equals $(\det_r\sigma)^{-1}\nu$, and
$$
\nu_{B}\begin{pmatrix}y_1 \\  y_2
\end{pmatrix}=(\hdet\sigma)\mathbb{P}^{-1}\begin{pmatrix}y_1 \\  y_2
\end{pmatrix},
$$
where $\det_r\sigma$ is an algebra automorphism induced by $\sigma$,  $\hdet\sigma\in M_2(\mathbbm{k})$ is determined by $\sigma$, and $\mathbb{P}\in M_2(\mathbbm{k})$ is determined by the data $P$  (see Equation \ref{matrixp}, Equation \ref{detdef} and Definition  \ref{hdett} for their definitions);\\
\item $B$ is Calabi-Yau if and only if $\det_r\sigma=\nu$ and $\hdet\sigma=\mathbb{P}$.
\end{enumerate}}

In a similar way, one can obtain the analogous results on the Nakayama automorphism and the Calabi-Yau property of the skew polynomial extension of a Koszul Artin-Schelter regular algebra (see  Proposition \ref{nakare} and Theorem \ref{orecy}).

In \cite[Theorem 6]{F05}, Farinati showed that the Calabi-Yau property is preserved by noncommutative localizations. Here, we characterize the Calabi-Yau property of the localization of both the skew polynomial extension with respect to the Ore set $\{t^i, i\in \mathbbm{N}\}$ (called the skew Laurent extension) and the iterated skew polynomial extension. The third main result reads as follows:

 \noindent {\bf Theorem 3.} (Theorem \ref{skewl} and Theorem \ref{skew2})
\emph{ Let $A$ be a Koszul Artin-Schelter regular algebra with  Nakayama automorphism $\nu$. \\
{\rm(1)}. The skew Laurent extension $A[t^{\pm 1}; \theta]$ of $A$
is Calabi-Yau if and only if there exists an integer $n$ such that
$\theta^n=\nu$ and the homological determinant $\hdet(\theta)$ of $\theta$ equals $1$.\\
{\rm(2)}. Given two automorphisms $\tau$ and $\xi$ of $A$, let, $Q=A[y_1^{\pm 1}, y_2^{\pm 1}; \sigma]$, where $\sigma=\text{diag}(\tau, \xi)$ is a map from $A$ to $M_{2\times 2}(A)$. Then, $Q$ is Calabi-Yau if and only if there exists two integers $m, n$ such that $\tau^m\xi^n=\nu$ and $\hdet(\tau)=\hdet(\xi)=1$.}

In fact, Part (2) of Theorem 3 is a special case of what is proved in Theorem \ref{skew2}. The aforementioned results and their proofs indicate that there exists a strong relation between the Nakayama automorphisms of those extensions and the homological determinants of the automorphisms which determine those extensions (see Theorem 2 and Theorem 3). The Nakayama automorphisms of the right coideal subalgebras of the quantized enveloping algebras were explicitly computed \cite{LW14}. In fact, those coideal subalgebras are special iterated Ore extensions. The general case of iterated Ore extensions and their relation with
double Ore extensions  were discussed in \cite{CLM11}.  So it would be interesting to study the Nakayama automorphism and the Calabi-Yau property of double Ore extensions and those of the localizations of iterated skew polynomial extensions in general.

The paper is organised as follows. In Section 1, we recall the definitions and the properties, including the relation between the Nakayama automorphism of a Koszul Artin-Schelter regular algebra and its Yoneda Ext algebra.  Section 2  prepares necessary means for computing the Nakayama automorphisms of trimmed double Ore extensions of Koszul algebras.

In Section 3, we mainly compute the Nakayama automorphism and study the Calabi-Yau property of trimmed double Ore extensions of Koszul Artin-Schelter regular algebras.
In Section 4, apart from what we mentioned in Theorem 3,  the Calabi-Yau property of the skew Laurent extensions and the Calabi-Yau property of a localization of iterated Ore extensions are studied. Necessary and sufficient conditions for those algebras to be Calabi-Yau are determined, see Theorem \ref{cyite}.

Throughout, $\mathbbm{k}$ is a field  and all algebras are $\mathbbm{k}$-algebras;
unadorned $\otimes$ means $\otimes_\mathbbm{k}$ and $^*$ always denotes the dual over $\mathbbm{k}$.

\section{Preliminaries}

An $\mathbb{N}$-graded algebra $A=\bigoplus_{i\geqslant0}A_i$ is called
 connected if $A_0=\mathbbm{k}$. By a graded algebra we mean a
locally finite graded algebra generated in degree $1$.  Let $A^e=A\otimes A^{op}$ denote the enveloping algebra of $A$. A module means a left (graded) module. The shifting of a graded module is denoted $(\,)$.
For a module $M$ over $A$, $^\varphi M$ stands for a twisted module by an algebra automorphism $\varphi$, where the
action is defined by $a\cdot m:=\varphi(a)m$. Similarly, $M^\varphi$ and $^1M^\varphi$ denote the twisted right module and the twisted bimodule respectively. 
%
%

Let $V$ be a finite-dimensional vector space, and
$T_\mathbbm{k}(V)$ be the tensor algebra with the usual grading. A connected graded algebra
$A=T_\mathbbm{k}(V)/\langle \, R \,\rangle$ is called a quadratic algebra if
$R$ is a subspace of $V^{\otimes 2}$. The homogeneous dual of $A$ is
then defined as $A^!=T_\mathbbm{k}(V^*)/\langle \,R^\perp \,\rangle$,
where
$$R^\perp=\{\lambda\in V^*\otimes V^*\mid \lambda(r)=0, \quad \text{for all}\quad r\in R\}.$$
Here, we identify $(V\otimes V)^*$ with $V^*\otimes V^*$ by
\begin{equation} \label{rperp} (\alpha\otimes \beta)(x\otimes y)=\alpha(x)\beta(y)
\end{equation}
for $\alpha, \beta\in V^*$ and $x, y\in V$. For more detail, see \cite{Sm96}.

\begin{definition} A quadratic algebra $A$ is called  Koszul if the
trivial $A$-module $_A\mathbbm{k}$ admits a projective resolution
\begin{equation*} \label{koszul} \cdots{\longrightarrow} P_n{\longrightarrow} P_{n-1} {\longrightarrow}
\cdots \longrightarrow P_1{\longrightarrow}
P_0{\longrightarrow}_A\mathbbm{k}{\longrightarrow}0
\end{equation*} such that $P_n$ is
generated in degree $n$ for all $n \geq 0.$
\end{definition}

For more detail about Koszul algebras and the Koszul duality, we refer
the reader to \cite[Ch.2]{PP05}. Now, we recall the definitions of an Artin-Schelter regular algebra, a Nakayama automorphism and a Calabi-Yau algebra.

\begin{definition}
A connected graded algebra $A$ is called  Artin-Schelter (AS, for short) Gorenstein of dimension $d$ with parameter $l$ for some integers $d$ and $l$, if
\begin{enumerate}
\item [(i)] $\inj.\dim(_AA)=\inj.\dim(A_A)=d$; and
\item [(ii)] $\Ext^i_A(\mathbbm{k}, A)\cong\Ext^i_{A^{op}}(\mathbbm{k}, A)\cong\begin{cases}0, & i \neq
d,  \\
 \mathbbm{k}(l), & i=d.
\end{cases}$
\end{enumerate}
If, in addition, $A$ has a finite global dimension, then $A$ is called
 AS-regular.
\end{definition}

\begin{definition} \cite{G06, BZ08} A graded algebra $A$ is called twisted Calabi-Yau of dimension $d$ if
\begin{enumerate}
\item  [(i)] $A$ is homologically smooth, i.e., $A$, as an $A^e$-module,
has a finitely generated projective resolution of finite length.
\item [(ii)]
$\Ext^i_{A^e}(A, A^e)\cong\begin{cases}0, & i \neq d
  \\
  A^{\nu}(l), & i=d
\end{cases}$ as  $A^e$-modules for some automorphism $\nu$ of $A$ and some integers $d, l$.
\end{enumerate}
The automorphism $\nu$ is called {\it the Nakayama automorphism} of $A$.
If, in addition, $A^{\nu}$ is isomorphic to $A$ as $A^e$-modules, or equivalently, $\nu$ is inner, then $A$ is called {\it Calabi-Yau of dimension $d$}. Ungraded Calabi-Yau algebras are defined similarly but without degree shift.
\end{definition}

Let $E$ be a Frobenius algebra. By definition, there is an isomorphism $\varphi: E\longrightarrow E^*$ of right $E$-modules. This is equivalent to the existence of a nondegenerate bilinear form, often called Frobenius pair, $\langle-, -\rangle: E\times E\rightarrow \mathbbm{k}$
such that $\langle ab, c\rangle=\langle a, bc\rangle$ for all $a, b ,c\in E$ (where the bilinear form is defined by
$\langle a, b\rangle:=\varphi(b)(a)$). By the nondegeneracy of the bilinear form, there exists an automorphism
$\mu$, unique up to an inner automorphism, such that
\begin{equation} \label{nakdef}
\langle a, b\rangle=\langle\mu(b), a\rangle
\end{equation}  for all $a, b\in E$.
Thus, $\varphi$ becomes an isomorphism of $E$-bimodules $^\mu E\cong E^*$.
The automorphism $\mu$ is usually called the Nakayama automorphism
of $E$.  For more detail, see \cite{Sm96}.

{\it Now, there are two notions of Nakayama automorphisms: one  for twisted Calabi-Yau algebras and one for
Frobenius algebras. We use $\nu$ for the former and $\mu$ for the latter if there is no confusion}.  In fact, the notion of a Nakayama automorphism in \cite{BZ08} can be defined for algebras
with finite injective dimension, and it coincides with the classical Nakayama automorphism of a Frobenius
algebra. But in this paper, we focus ourselves on twisted Calabi-Yau algebras (or equivalently, AS-regular algebras in
the connected graded case \cite[Lemma 1.2]{RRZ14}). It is well known that a connected graded algebra $A$ is AS-regular if and only if its Yoneda Ext algebra
is Frobenius \cite[Corollary D]{LPWZ08}. In this case, the two notions of Nakayama automorphisms will coincide in the sense of the  Koszul duality, see
Proposition \ref{criteria}. To get there, we need the following preparation.

Let $A=T_\mathbbm{k}(V)/\langle \,R \,\rangle$ be a
Koszul algebra. Then its Yoneda Ext algebra $E(A):=\bigoplus_{i\in \mathbb{N}}\Ext_A^i(\mathbbm{k}, \mathbbm{k})$ is isomorphic to $A^!=T_\mathbbm{k}(V^*)/\langle \, R^\perp \,\rangle$, see \cite{Sm96}.
For a  graded automorphism $\theta$ of $A$, we define a map
$\theta^*: V^*\rightarrow V^*$ by $\theta^*(f)(x)=f(\theta(x))$ for each $f\in V^*$ and $x\in V$. It is easy to see that $\theta^*$ induces a graded automorphism of $A^!$ because $\theta$ is assumed to preserve the relation space $R$.
We still use the notation $\theta^*$ for this algebra automorphism.  Suppose that  $\{e_1, e_2, \cdots, e_n\}$ is a $\mathbbm{k}$-linear basis of $V$  and $\{e_1^*, e_2^*, \cdots, e_n^*\}$ is the corresponding dual basis of $V^*$. If $\theta(e_i)=\sum\limits_jc_{ij}e_j$ for $c_{ij}
\in\mathbbm{k} \, (1 \leq i, j \leq n)$,
 then we have:
\begin{equation} \label{eqdual} \theta^*(e_i^*)=\sum_jc_{ji}e_j^*.
\end{equation} Moreover, for each $i, j=1, 2, \cdots, n$, we have:
\begin{equation} \label{eqdual1} \theta^*(e_i^*)(e_j)=e_i^*(\theta(e_j)) .
\end{equation}

\begin{proposition}\cite[Theorem 9.2]{VdB97}\label{criteria} Let $A$ be a Koszul
AS-regular algebra of dimension $d$.  Then, the Nakayama automorphism $\nu$ of $A$ is equal to $\epsilon^{d+1}\mu^{*}$,  where $\mu$ is the Nakayama automorphism of the Frobenius algebra $A^!$ and $\epsilon$ is the automorphism of $A$ defined by  $a \mapsto (-1)^{\deg a}a$, for each homogeneous element $a\in A$. \qed
\end{proposition}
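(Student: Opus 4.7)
The plan is to compute $\Ext^d_{A^e}(A, A^e)$ explicitly using the Koszul bimodule resolution of $A$ and then to extract the Nakayama automorphism from the resulting bimodule structure by comparing it to the Frobenius structure on $A^!$. Since $A$ is Koszul, the minimal resolution of the trivial module $\mathbbm{k}_A$ has the form $P_n = A \otimes (A^!_n)^*$ generated in degree $n$; by a standard construction this lifts to a minimal bimodule resolution $K_\bullet = A \otimes (A^!)^* \otimes A$ of $A$ over $A^e$, with differentials built from the multiplication maps in $A^!$ transported through the evaluation pairing between $A^!$ and its graded dual.

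First I would apply $\Hom_{A^e}(-, A^e)$ to $K_\bullet$. Using the canonical identification $\Hom_{A^e}(A \otimes W \otimes A,\, A^e) \cong W^* \otimes A^e$ valid for any finite-dimensional vector space $W$, the dualized complex takes the form $A^!_\bullet \otimes A^e$, whose differentials are now induced directly by multiplication in $A^!$. AS-regularity of dimension $d$ forces every cohomology group of this complex to vanish except in degree $d$, where we get a bimodule of the shape $A^{\nu}(l)$ for some automorphism $\nu$ of $A$, the Nakayama automorphism we want to identify.

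Second, the Frobenius property of $A^!$ enters decisively in identifying $\nu$. Because $A^!_d \cong \mathbbm{k}$ and the Frobenius pairing is concentrated in top degree, the cokernel of the last differential is a free rank-one $A$-bimodule. On this top cohomology, the left $A$-action on a cocycle corresponds, via \eqref{nakdef}, to one of the two one-sided multiplications in $A^!$, while the right $A$-action corresponds to the other; the Frobenius Nakayama automorphism $\mu$ of $A^!$ is by definition the discrepancy between these. Transporting this automorphism of $A^!$ back to an automorphism of $A$ through the dualization $V^* \leftrightarrow V$ recorded in \eqref{eqdual} produces precisely the twist by $\mu^*$.

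The main obstacle is the sign bookkeeping that yields the factor $\epsilon^{d+1}$. The Koszul bimodule differentials carry Koszul signs in each homological degree, and after dualization these signs conspire with the sign arising from the canonical isomorphism $(A^!)^{**} \cong A^!$ restricted to the top component $A^!_d$, where reversing $d$ tensor factors contributes a parity factor. A careful term-by-term comparison between the two ways of writing the top cohomology class as an element of $A^!_d \otimes A^e$ gives an overall discrepancy of $(-1)^{d+1}$ in each degree, which is exactly the action of $\epsilon^{d+1}$ on $A$. Combining this sign with the twist by $\mu^*$ yields the equality $\nu = \epsilon^{d+1}\mu^*$ as claimed.
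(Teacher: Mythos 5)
The paper gives no proof of this proposition --- it is quoted directly from [VdB97, Theorem 9.2] --- so there is no internal argument to compare against; your sketch should be measured against the cited source. What you describe is indeed the standard proof: dualize the Koszul bimodule resolution $A\otimes (A^!_\bullet)^*\otimes A$ against $A^e$ to get the complex $A^!_\bullet\otimes A^e$, use the Frobenius property of $A^!$ (equivalent to AS-regularity by [LPWZ08]) to see that this dual complex is again a shifted, twisted copy of the Koszul complex and hence has cohomology only in degree $d$, and read off the twist from the discrepancy between left and right multiplication into the one-dimensional top component $A^!_d$, which is by definition $\mu$ and becomes $\mu^*$ after transporting from $V^*$ back to $V$; this is also exactly the mechanism the paper itself exploits later in Propositions \ref{nakayaab}--\ref{nakaab}. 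Your outline is correct in structure; the only places where a full write-up would need real care are the ones you already flag --- the Koszul signs producing $\epsilon^{d+1}$, and the convention-sensitive direction of the twist (whether one lands on $\mu^*$ or its inverse depends on which of the two $A^e$-module structures on $A^e$ is consumed by the Hom), neither of which is a conceptual gap.
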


Next, we recall the definition and some basic properties
of a double Ore extension.
\begin{definition} \label{def}\cite{ZZ08, ZZ09} Let $A$ be a subalgebra of a $\mathbbm{k}$-algebra $B$.

 $(1)$.  $B$ is called a right double Ore extension of $A$ if
\begin{enumerate}
\item[(\rmnum{1})] $B$ is generated by $A$ together with two variables $y_1$ and $y_2$;
\item[(\rmnum{2})] $y_1$ and $y_2$ satisfy the relation:
\begin{equation*} \label{newer} y_2y_1=py_1y_2+qy_1^2+\tau_1y_1+\tau_2y_2+\tau_0
\end{equation*}
for some $p, q\in \mathbbm{k}$ and $\tau_1, \tau_2, \tau_0\in A$;
\item[(\rmnum{3})] $B$ is a free left $A$-module with basis $\{y_1^iy_2^j;i, j\geq 0\}$;

\item[(\rmnum{4})] $y_1A+y_2A\subseteq Ay_1+Ay_2+A$.
\end{enumerate}
$(2)$.  $B$ is called a left double Ore extension of $A$ if:
\begin{enumerate}
\item[(\rmnum{1})] $B$ is generated by $A$ and two new variables $y_1$ and $y_2$;
\item[(\rmnum{2})] $y_1$ and $y_2$ satisfy the relation
\begin{equation*} 
y_1 y_2=p^\prime y_2y_1+q^\prime y_1^2+ y_1\tau_1^\prime+ y_2\tau_2^\prime+\tau_0^\prime
\end{equation*}
for some $p^\prime, q^\prime\in \mathbbm{k}$ and $\tau_1^\prime, \tau_2^\prime, \tau_0^\prime\in A$;
\item[(\rmnum{3})] $B$ is a free right $A$-module with basis $\{y_2^iy_1^j;i, j\geq 0\}$;
\item[(\rmnum{4})] $Ay_1+Ay_2\subseteq y_1A+y_2A+A$.
\end{enumerate}
$(3)$.  $B$ is called a double Ore extension of $A$ if it is a left and a right double Ore extension of $A$ with the same
generating set $\{y_1, y_2\}$.
\end{definition}

Note that Condition (1).(\rmnum{4}) in the Definition 1.5 is equivalent to the existence of two maps:
$$
\sigma=\begin{pmatrix} \sigma_{11} & \sigma_{12} \\ \sigma_{21} & \sigma_{22}
\end{pmatrix}: A\rightarrow M_{2\times 2}(A)\quad\quad  \text{and}\quad\quad \delta=\begin{pmatrix}\delta_1 \\  \delta_2
\end{pmatrix}: A\rightarrow M_{2\times 1}(A)
$$
subject to
\begin{equation}  \label{sigma} \begin{pmatrix}y_1 \\  y_2
\end{pmatrix}a=\sigma(a)\begin{pmatrix}y_1 \\  y_2
\end{pmatrix}+\delta(a)
\end{equation}
for all $a\in A$, where $\sigma_{ij}, \delta_i\in \mathrm{End}_{\mathbbm{k}}(A)$. {\it In case $B$ is a right double Ore extension of $A$, we will write $B=A_P[y_1, y_2; \sigma, \delta, \tau]$, where $P=(p, q)\in \mathbbm{k}^2$,  $\tau=(\tau_0, \tau_1, \tau_2)\in A^3$,  and $\sigma, \delta$ as above}.
Along with the datum $P$, we define a matrix $\mathbb{P}$ in $M_2(\mathbbm{k})$  as follows:
\begin{equation} \label{matrixp}
\mathbb{P}=\begin{pmatrix} p& 0 \\ -(1+\frac{1}{p})q& \frac{1}{p}
\end{pmatrix}\end{equation}
Like in an Ore extension, here $\sigma$ is a homomorphism of algebras
and $\delta$ is a $\sigma$-derivation, that is, $\delta$ is $\mathbbm{k}$-linear and satisfies $\delta(ab)=\sigma(a)\delta(b)+\delta(a)b$, for all $a, b\in A$.
The double Ore extensions that we shall consider mainly in this work are the so-called {\it trimmed} double Ore extensions.

\begin{definition}\label{dfn1.6}
 A double Ore extension $A_P[y_1, y_2; \sigma, \delta, \tau]$ is called a {\it trimmed  double Ore extension},  if $\delta$ is the zero map and $\tau=\{0, 0, 0\}$. In this case, we use the short notation $A_P[y_1, y_2; \sigma]$ for a trimmed double Ore extension.
\end{definition}

Condition (2).(\rmnum{4}) in Definition \ref{dfn1.6} is equivalent to the existence of two maps
$$
\phi=\begin{pmatrix} \phi_{11} & \phi_{12} \\ \phi_{21} & \phi_{22}
\end{pmatrix}: A\rightarrow M_{2\times 2}(A)\quad \text{and}\quad \delta^\prime=\begin{pmatrix}\delta_1^\prime &  \delta_2^\prime
\end{pmatrix}: A\rightarrow M_{1\times 2}(A)
$$
satisfying
\begin{equation} \label{phi} a\begin{pmatrix}y_1 & y_2
\end{pmatrix}=\begin{pmatrix}y_1 &  y_2
\end{pmatrix}\phi(a)+\delta^\prime(a)
\end{equation}
for all $a\in A$. 
For a double Ore extension, the connection between $\sigma$ and $\phi$ in Equation \eqref{sigma} and Equation \eqref{phi} can  be seen in the following definition and lemma.

\begin{definition} \label{inverse}\cite{ZZ08} Let $\sigma:  A\rightarrow M_{2\times 2}(A)$ be an algebra homomorphism. We say that
$\sigma$ is invertible if there is an algebra homomorphism
$\phi=\begin{pmatrix} \phi_{11} & \phi_{12} \\ \phi_{21} & \phi_{22}
\end{pmatrix}: A\rightarrow M_{2\times 2}(A)$ satisfies the following conditions:\\
$$\sum\limits_{k=1}^ 2\phi_{jk}(\sigma_{ik}(r))=\begin{cases}r, & i=j   \\
 0, & i\neq j
\end{cases} \quad\text{and}\quad \sum\limits_{k=1}^2\sigma_{kj}(\phi_{ki}(r))=\begin{cases}r, & i=j   \\
 0, & i\neq j
\end{cases}$$
for all $r\in A$. The map $\phi$ is called the inverse of $\sigma$.
\end{definition}

The following lemma gives the relation of the condition that $\sigma$ is invertible and the condition a right double Ore extension being a double Ore extension.

\begin{lemma} \cite[Lemma 1.9 and its proof, Proposition 2.1]{ZZ08} \label{doeinv} Let $B=A_P[y_1, y_2; \sigma, \delta, \tau]$ be a right double Ore extension of $A$.
\begin{enumerate}
\item[(1)] If $B$ is a double Ore extension, then $\sigma$ is invertible with the inverse $\phi$ such that the Equation \eqref{phi} holds for some $\delta^\prime$.

\item[(2)] Suppose that both $A$ and $B$ are  connected graded algebras. If $p\neq 0$ and $\sigma$ is invertible, then
$B$ is a double Ore extension.\qed \end{enumerate}
\end{lemma}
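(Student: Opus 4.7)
The plan is to prove both parts by playing off the left and right $A$-module structures of $B$ against the relations \eqref{sigma} and \eqref{phi}. For part (1), assume $B$ is a (two-sided) double Ore extension of $A$. The element $y_i a\in B$ has two unique presentations: tautologically, in the right $A$-basis $\{y_2^m y_1^n\}$, it is simply $y_i\cdot a$ (so the coefficient of $y_k$ is $\delta_{ik}a$ and the $A$-part is $0$). On the other hand, starting from $y_ia$ and applying the right rule \eqref{sigma} first and then the left rule \eqref{phi} to each intermediate $\sigma_{ij}(a)\,y_j$ rewrites
\[
y_i a \;=\; \sum_{k} y_k\Bigl(\sum_j \phi_{kj}(\sigma_{ij}(a))\Bigr) + (\text{element of }A).
\]
Uniqueness in the right $A$-basis then forces $\sum_j \phi_{kj}(\sigma_{ij}(a)) = \delta_{ik}\,a$, which is (after renaming indices) one half of Definition \ref{inverse}. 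The other half is the symmetric statement obtained by reversing the order: apply the left rule to $ay_j$ first and then the right rule to the results, comparing against the tautological left-basis presentation.

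For part (2), I verify the four conditions of Definition \ref{def}(2). Conditions (i) and (ii) are immediate: the generators are shared, and solving $y_2y_1=py_1y_2+qy_1^2+\tau_1 y_1+\tau_2 y_2+\tau_0$ for $y_1y_2$ uses exactly $p\neq 0$ and produces the required form with $p'=p^{-1}$, $q'=-p^{-1}q$, etc. For condition (iv), I construct a candidate left rule from the given inverse by setting $\delta_j'(a):=-\sum_i\delta_i(\phi_{ij}(a))$ and claiming $ay_j=\sum_i y_i\phi_{ij}(a)+\delta_j'(a)$. The verification consists of applying the right rule \eqref{sigma} to each $y_i\phi_{ij}(a)$ on the right-hand side and invoking the second invertibility identity $\sum_i\sigma_{ik}(\phi_{ij}(a))=\delta_{jk}a$ to collapse the $y_k$-coefficients to $a$ when $k=j$ and to $0$ otherwise, so that the right-hand side equals $ay_j$ in $B$. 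This simultaneously gives (iv) and identifies $\phi,\delta'$ as the maps in \eqref{phi}.

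The main obstacle is condition (iii): right-freeness of $B$ on the basis $\{y_2^iy_1^j\}$. This is where the graded hypothesis enters, via Hilbert series. Left-freeness on $\{y_1^iy_2^j\}$ gives $H_B(t)=H_A(t)/(1-t)^2$, and the proposed right basis is parametrized by an index set with the same total-degree distribution, so it would give precisely this Hilbert series were it free. Consequently, it suffices to show the $y_2^iy_1^j$ span $B$ as a right $A$-module — freeness then follows from equality of Hilbert series in each degree. Spanning is proved by induction on total $y$-degree: any monomial $y_1^iy_2^j\cdot a$ is rewritten by iterated use of (ii) — each step solving $y_1y_2=p^{-1}y_2y_1+\cdots$, which requires $p\neq 0$ — combined with the left rule from the previous paragraph (to push $A$-coefficients across $y_k$'s), terminating in an $A$-right-linear combination of $\{y_2^my_1^n\}$. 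The condition $p\neq 0$ is what keeps each rewriting step non-degenerate; without it the leading term would be lost and the induction would collapse.
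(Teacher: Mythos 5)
The paper offers no argument for this lemma: it is imported verbatim from \cite{ZZ08}, so there is no in-house proof to compare against and your reconstruction has to be judged on its own. Part (1) is correct and is the standard double-expansion argument: expanding $y_ia$ first through \eqref{sigma} and then through \eqref{phi}, and comparing with the tautological expansion in the right $A$-basis $\{y_2^my_1^n\}$ (in which $1$, $y_1$, $y_2$ all occur as basis elements, so the coefficients of $y_1$ and $y_2$ are pinned down), gives $\sum_j\phi_{kj}(\sigma_{ij}(a))=\delta_{ik}a$, and the symmetric computation on $ay_j$ against the left basis gives the other identity of Definition \ref{inverse}. In part (2), conditions (i), (ii) and (iv) are handled correctly: in particular the verification that $ay_j=\sum_i y_i\phi_{ij}(a)-\sum_i\delta_i(\phi_{ij}(a))$ via $\sum_i\sigma_{ik}(\phi_{ij}(a))=\delta_{jk}a$ is exactly right, and reducing right-freeness to right-spanning by comparing $\H_B(t)=\H_A(t)/(1-t)^2$ degree by degree is a legitimate use of the graded hypothesis.

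The one genuine gap is the termination of the straightening procedure behind the spanning claim in condition (iii). You propose induction on total $y$-degree, but neither of the two rewriting moves that do the real work decreases that degree: the reordering $y_1y_2\mapsto p^{-1}y_2y_1-p^{-1}qy_1^2+(\hbox{lower $y$-degree})$ replaces one length-two subword by two length-two subwords, and pushing a coefficient across a generator via $ay_k=\sum_l y_l\phi_{lk}(a)+\delta'_k(a)$ likewise preserves the leading $y$-degree. So the induction as stated does not close; moreover the $y_1^2$ term defeats the naive inversion count, since changing the letter at position $k+1$ from $y_2$ to $y_1$ can create new inversions with letters further to the right. What is needed is a secondary well-founded measure within each fixed $y$-degree: for instance, first push all coefficients to the right (this terminates because each step strictly advances the leftmost stranded coefficient), and then order the resulting pure words $w=w_1\cdots w_\ell$ by $\varphi(w)=\sum_{k:\,w_k=y_2}k$, the sum of the positions of the occurrences of $y_2$; this drops by $1$ under $y_1y_2\mapsto y_2y_1$ and by $k+1>0$ under $y_1y_2\mapsto y_1^2$, so the reordering terminates, with all discarded $\tau$- and $\delta'$-terms absorbed by the outer induction on $y$-degree. (Equivalently, one can invoke Bergman's diamond lemma for this order.) With that supplement your argument for part (2) is complete.
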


Next, we list the identities induced by commuting the equation $y_2y_1=py_1y_2+qy_1^2$ with element $r\in A$.  Explicitly,   since $y_2y_1r=(py_1y_2+qy_1^2)r$ for each $r\in A$, so we get the relations R$3.1$-R$3.3$ in \cite[p. 2674]{ZZ08} (as we only consider the trimmed double Ore extension here). Dually,  we have the following
\begin{enumerate}
\item [(R$^\prime$3.1)] $\phi_{11}(\phi_{12}(r))+q\phi_{11}(\phi_{22}(r))=p\phi_{12}(\phi_{11}(r))+q\phi_{11}(\phi_{11}(r))
    +pq\phi_{12}(\phi_{21}(r))+q^2\phi_{11}(\phi_{21}(r))$

\item [(R$^\prime$3.2)] $\phi_{21}(\phi_{12}(r))+p\phi_{11}(\phi_{22}(r))=p\phi_{22}(\phi_{11}(r))+q\phi_{21}(\phi_{11}(r))
    +p^2\phi_{12}(\phi_{21}(r))+pq\phi_{11}(\phi_{21}(r))$

\item [(R$^\prime$3.3)] $\phi_{21}(\phi_{22}(r))=p\phi_{22}(\phi_{21}(r))+q\phi_{21}(\phi_{21}(r))$
\end{enumerate}

In order to study the regularity of double Ore extensions, Zhang and Zhang introduced an invariant of
$\sigma$, called the (right) determinant of $\sigma$, which is similar to the quantum  determinant of the $2\times 2$-matrix.
As we will see, this invariant will play an important role in the description of the Nakayama automorphism of the trimmed double Ore extension.

Let $B=A_P[y_1, y_2; \sigma, \delta, \tau]$ be a right double Ore extension of $A$.
The right determinant of $\sigma$ is defined to be the map:
\begin{equation} \label{detdef}
\det_r\sigma:\ A\longrightarrow A, \  a\mapsto -q\sigma_{12}(\sigma_{11}(a))+\sigma_{22}(\sigma_{11}(a))-p\sigma_{12}(\sigma_{21}(a))
\end{equation}
for  $a\in A$. If $\sigma$ is invertible with the inverse $\phi$, then the left determinant of $\phi$ is defined by: $$\det_l\phi:=-q\phi_{11}\circ\phi_{21}+\phi_{11}\circ\phi_{22}-p\phi_{12}\circ\phi_{21}.$$

We remark  that  when $q=0$ the above expression of $\det_l\phi$ coincides with the one in [ZZ08] after E2.1.6.
The following properties of the determinant of $\sigma$ were given in \cite{ZZ08}.
\begin{proposition}\label{determ}  \cite[proof of Proposition 2.1]{ZZ08} Let $B=A_P[y_1, y_2; \sigma, \delta, \tau]$ be a double Ore extension of $A$
such that $\sigma$ is invertible with inverse $\phi$. Then,
\begin{enumerate}
\item[(1)] $\det_r\sigma$ is an algebra endomorphism of $A$;
\item[(2)] if $p\neq 0$, then \begin{align*}\det_r\sigma&=\frac{q}{p}\sigma_{11}\circ\sigma_{12}+\sigma_{11}\circ\sigma_{22}-\frac{1}{p}\sigma_{21}\circ\sigma_{12},\\ \det_l\phi&=\frac{q}{p}\phi_{21}\circ\phi_{11}+\phi_{22}\circ\phi_{11}-\frac{1}{p}\phi_{21}\circ\phi_{12};
    \end{align*}
 \item[(3)]   $\det_r\sigma$ is invertible with inverse $\det_l\phi$.
\end{enumerate}
\end{proposition}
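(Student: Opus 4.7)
The central tool is the associativity of multiplication in $B$ together with the freeness of $B$ as a left $A$-module on the basis $\{y_1^i y_2^j\}_{i,j\geq 0}$. For any $a\in A$ I would expand $y_2 y_1 a$ in two ways: (a) first push $a$ past $y_1$ and then past $y_2$ using \eqref{sigma}; (b) first substitute $y_2 y_1 = p\,y_1 y_2 + q\,y_1^2 + \tau_1 y_1 + \tau_2 y_2 + \tau_0$ and then push $a$ to the right through each monomial. Matching coefficients in the resulting expansions yields a family of polynomial identities in $A$ among the compositions $\sigma_{ij}\circ\sigma_{kl}$ (together with correction terms involving $\delta$ and $\tau$). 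The identity extracted from the coefficient of $y_1 y_2$ is the ``quantum-determinant'' relation that will serve all three parts; the remaining coefficient identities provide the auxiliary cancellations one later needs.

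For part (1), I would verify the multiplicativity $\det_r\sigma(ab) = \det_r\sigma(a)\,\det_r\sigma(b)$ by direct expansion, using the entrywise algebra-homomorphism property $\sigma_{ij}(ab) = \sum_k \sigma_{ik}(a)\sigma_{kj}(b)$. Substituting this into \eqref{detdef} and expanding the second layer of $\sigma$'s produces a sum of bilinear expressions in $a$ and $b$. The structurally matching terms recombine into $\det_r\sigma(a)\,\det_r\sigma(b)$, while the unwanted cross-terms cancel by virtue of the additional identities extracted in the setup. For part (2), when $p\neq 0$, the equality of the two expressions for $\det_r\sigma$ is precisely the $y_1 y_2$-coefficient identity rewritten using $1/p$; the dual formula for $\det_l\phi$ follows by running the same argument in the left double Ore extension structure \eqref{phi} and coupling it with the inverse conditions of Definition \ref{inverse}.

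For part (3), I would compute $\det_l\phi\circ\det_r\sigma$ directly from the formulas in part (2). Expanding via $\phi_{ij}(xy) = \sum_k \phi_{ik}(x)\phi_{kj}(y)$, peeling off the outer layer, and collapsing the resulting sum using the inverse identities $\sum_k \phi_{jk}\sigma_{ik}$ (which equals $\id_A$ when $i=j$ and $0$ otherwise, and similarly on the other side) yields the identity, a noncommutative analogue of $\det(M^{-1})\det(M) = 1$. The main technical obstacle lies squarely in part (1): the bookkeeping of the associativity expansion is substantial, and the cancellations that force multiplicativity are not visible from \eqref{detdef} alone — they depend essentially on the quadratic commutation relation between $y_1$ and $y_2$. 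Once the identities from the associativity analysis are in hand, however, the rest is routine algebraic manipulation.
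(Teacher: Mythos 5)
The paper offers no proof of this proposition --- it is imported verbatim from \cite[Proposition 2.1]{ZZ08} --- so there is nothing in the text to compare against; your outline can only be measured against the argument of the cited source, which it essentially reproduces. Your starting point is right: expanding $y_2y_1a$ in the two possible orders and comparing coefficients against the free left $A$-basis $\{y_1^iy_2^j\}$ yields exactly the relations (R3.1)--(R3.3) of \cite[p. 2674]{ZZ08} (the same relations this paper invokes in the proof of Proposition \ref{basis}); the terms involving $\delta$ and $\tau$ only perturb the coefficients of degree at most one, and the $y_1y_2$-coefficient identity, divided by $p$, is precisely the equality of \eqref{detdef} with the formula in part (2). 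The one place where the proposal stops short of a proof is part (1): you assert that the cross-terms in the twelve-term expansion of $\det_r\sigma(ab)$ cancel ``by virtue of the additional identities,'' but that cancellation is the entire content of the claim and has to be exhibited. It does go through, but if you want to avoid the bookkeeping, the same relations (R3.1)--(R3.3) show directly that the map $r\mapsto (r(qy_1-y_2),\, rpy_1)$ from $B^{\det_r\sigma}$ to $(B\oplus B)^{\sigma}$ (the map $g$ of \eqref{ab}) is right $A$-linear and injective; since the right $A$-action on $(B\oplus B)^{\sigma}$ is multiplicative because $\sigma$ is an algebra homomorphism, multiplicativity of $\det_r\sigma$ follows at once. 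Parts (2) and (3) are fine as outlined, with the small caveat that the $\phi$ of \eqref{phi} must first be identified with the inverse of $\sigma$ from Definition \ref{inverse} (via Lemma \ref{doeinv}) before the left-hand associativity relations can be coupled with the inverse conditions.
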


Remark that  the equation $\det_l\phi=\frac{q}{p}\phi_{21}\circ\phi_{11}+\phi_{22}\circ\phi_{11}-\frac{1}{p}\phi_{21}\circ\phi_{12}$
follows from the relation R$^\prime3.2$.
Here, we use the notions of $\det_r$ and $\det_l$ in order to differ the morphisms determined by $\sigma$ and $\phi$. Note that there is a print typos in the formula of \cite[line -11, page 2677]{ZZ08}, where the minus sign of the first term should be dropped.  In fact, that can be verified by using \cite[R3.2, page 2674]{ZZ08}.

Double Ore extensions are used to construct higher dimensional AS-regular algebras from  lower dimensional ones because of the following result which  will be used later.

\begin{lemma}\cite[Theorem 0.2]{ZZ08} Let $A$ be an AS-regular algebra. If $B$ is a connected graded  and a double Ore extension of $A$, then $B$ is AS-regular and $\gldim B=\gldim A+2.$ \qed
\end{lemma}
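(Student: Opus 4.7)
The plan is to build an explicit length-$(d+2)$ graded free resolution of ${}_B\mathbbm{k}$ (where $d=\gldim A$) and then verify the AS-Gorenstein symmetry by dualizing into $B$. Start with the minimal graded free resolution $P_\bullet\to {}_A\mathbbm{k}$ over $A$. Because $B$ is free as a left $A$-module with PBW basis $\{y_1^iy_2^j\}$ (Definition 1.4(1)(iii)), the functor $B\otimes_A-$ is exact, so $B\otimes_A P_\bullet$ resolves $B\otimes_A {}_A\mathbbm{k}\cong B/B\cdot A^+\cong R$ as a graded left $B$-module, where $R=\mathbbm{k}\langle y_1,y_2\rangle/(y_2y_1-py_1y_2-qy_1^2)$ is the quadratic algebra governing the $y$-side. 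The algebra $R$ is Koszul with a single quadratic relation, so ${}_R\mathbbm{k}$ admits the length-$2$ resolution
\begin{equation*}
0\to R(-2)\to R(-1)^{\oplus 2}\to R\to\mathbbm{k}\to 0.
\end{equation*}
Lifting this to a complex $K_\bullet$ of free $B$-modules (with corrections from $\sigma$, $\delta$, $\tau$) and splicing with $B\otimes_A P_\bullet$ via a bicomplex/mapping-cone construction produces a length-$(d+2)$ graded free resolution of ${}_B\mathbbm{k}$, which gives $\gldim B\le d+2$.

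For the AS-Gorenstein condition, I would apply $\Hom_B(-,B)$ to the constructed resolution. The dual of $B\otimes_A P_\bullet$ can be identified with $\Hom_A(P_\bullet,A)\otimes_A B$ using freeness of $B$ over $A$, together with the left-right interchange coming from the left double Ore data $\phi,\delta'$ of Definition 1.4(2); its cohomology is concentrated in homological degree $d$ and equals $\mathbbm{k}(l)$ by AS-regularity of $A$. The dual of $K_\bullet$ is a length-$2$ cochain complex whose cohomology is concentrated in degree $2$ and is one-dimensional, by the standard Frobenius calculation for the two-generator quadratic algebra $R^!$. Combining via the (degenerate) spectral sequence of the total bicomplex gives $\Ext^i_B(\mathbbm{k},B)=0$ for $i\ne d+2$ and $\Ext^{d+2}_B(\mathbbm{k},B)\cong\mathbbm{k}(l')$; the computation with $\Hom_{B^{op}}(-,B)$ proceeds symmetrically. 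Together with the finite resolution this shows $\gldim B=d+2$ and $B$ is AS-regular.

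The main obstacle is honestly lifting the Koszul complex for $R$ to a genuine free resolution over $B$. The relation $y_2y_1-py_1y_2-qy_1^2=0$ holds in $R$, but in $B$ it is only correct modulo terms involving $\tau_0,\tau_1,\tau_2$, and the differentials must accommodate the twists $\sigma,\delta$ whenever they are rewritten in terms of the $A$-part. Verifying that a consistent lift exists — equivalently, that the naive Koszul differentials for $R$ deform to genuine $B$-linear maps whose composition still vanishes — is where the full axioms of a (two-sided) double Ore extension are used: invertibility of $\sigma$ and the compatibility conditions on $\delta,\tau$ are exactly what supply consistent correction terms. The trimmed case ($\delta=0$, $\tau=0$) is noticeably easier and serves as a useful warm-up, mirroring the analogous classical argument for ordinary Ore extensions.
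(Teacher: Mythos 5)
This lemma is not proved in the paper at all: it is imported verbatim from \cite[Theorem 0.2]{ZZ08}, so the only meaningful comparison is with the proof there. Your overall strategy (splice a resolution coming from the $A$-direction with one coming from the $y$-direction via a bicomplex, then dualize) is indeed the shape of the argument in \cite{ZZ08}, and it is the same bicomplex pattern the present paper uses in Proposition \ref{koszulity}. But your proposal has a genuine gap precisely at the step you flag as ``the main obstacle'': the existence of a consistent lift of the length-$2$ Koszul resolution of ${}_R\mathbbm{k}$ to a complex of free $B$-modules. You do not construct it, and this is not a routine verification --- it is the technical heart of \cite{ZZ08}, namely their Theorem 2.2, which produces the exact sequence of $B$-$A$-bimodules
\begin{equation*}
0 \to B^{\det_r\sigma} \to (B\oplus B)^{\sigma}\to B \to A  \to 0
\end{equation*}
(recorded as \eqref{ab} in this paper), free on the appropriate sides, with the twists $\det_r\sigma$ and $\sigma$ appearing exactly as the ``correction terms'' whose existence you postulate. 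Without exhibiting these maps and checking that the compositions vanish (which uses the relations R3.1--R3.3 of \cite{ZZ08} and the invertibility of $\sigma$), the resolution of ${}_B\mathbbm{k}$ and hence the bound $\gldim B\le d+2$ is not established.

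Two further points need repair even granting the lift. First, exactness of $B\otimes_A-$ on left $A$-modules requires $B$ to be free (or flat) as a \emph{right} $A$-module, which is Definition \ref{def}(2)(iii) for a left double Ore extension, not the left-module freeness (1)(iii) you cite; for a two-sided double Ore extension both hold, but the distinction matters and is why the hypothesis ``double'' (not merely ``right double'') is needed. Second, in the dualization step the cohomology of $\Hom_A(P_\bullet,A)\otimes_A B$ is not $\mathbbm{k}(l)$ but $\Ext^d_A(\mathbbm{k},A)\otimes_A B\cong \mathbbm{k}(l)\otimes_A B$, an infinite-dimensional free module of rank one over the fibre $B/A^+B$; one must then run the second leg of the spectral sequence (the $\Ext_R$-direction, with the twist by $\det_r\sigma$ tracked) to cut this down to a one-dimensional space. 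The spectral sequence is not ``degenerate'' in the sense of one column collapsing immediately; asserting the answer at that intermediate stage skips the point where the twisted bimodule structure of \eqref{ab} actually enters the Gorenstein computation.
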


\section{Koszul algebra and homological determinant}
In this section, we make necessary preparation for computing the Nakayama automorphism of a trimmed double Ore extension. To this aim, we first prove  that the Koszul property is preserved by making a trimmed double Ore extension. We then introduce the  homological determinant of an algebra homomorphism $\sigma: A\to M_{2\times 2}(A)$ for a Koszul algebra $A$ and study its properties.

\begin{theorem} \label{koszulity} Let $A$ be a Koszul algebra and  $B=A_P[y_1, y_2; \sigma]$ be a trimmed double Ore extension of $A$. Then, $B$ is a Koszul algebra.
\end{theorem}
\begin{proof} Suppose that $M$ is a $B$-$A$-bimodule and $\varphi$ is an
automorphism of $A$. Recall that $^1M^\varphi$ is the twisted bimodule on the $\mathbbm{k}$-space $M$ with
$$b\cdot m\cdot a=bm\varphi(a)$$
for all $m\in M, b\in B$ and $a\in A$. On the space $M\oplus M$, there is another right $A$-module structure defined by using $\sigma$ as follows:
\begin{equation}\label{mds}
(m, n)\circ a=(m, n)\begin{pmatrix} \sigma_{11}(a) & \sigma_{12}(a) \\ \sigma_{21}(a) & \sigma_{22}(a)
\end{pmatrix}=(m\sigma_{11}(a)+n\sigma_{21}(a), m\sigma_{12}(a)+n\sigma_{22}(a))\end{equation}
for all $m, n\in M$ and $a\in A$.
Since $\sigma$ is an algebra homomorphism, $M\oplus M$ is a $B$-$A$-bimodule. Denote by $(M\oplus M)^{\sigma}$ this $B$-$A$-bimodule.
By \cite[Theorem 2.2]{ZZ08}, there is an exact
sequence of $B$-$A$-bimodules
\begin{equation} \label{ab} 0 \to B^{\det_r\sigma} \stackrel{g}{\to} (B\oplus B)^{\sigma}\stackrel{f}{\to} B \stackrel{\varepsilon}{\to}
A  \to 0,
\end{equation}
where, $f$ maps $(s, t)$ to $sy_1+ty_2$,  $g$ sends $r$ to $(r(qy_1-y_2), rpy_1)$ and the last term $A$ is identified with $B/(y_1, y_2)$. Moreover, \eqref{ab} is a linear resolution of $_BA$ in case  both $y_1$ and $y_2$ are of degree $1$.

Now,  by assumption, $_A\mathbbm{k}$ admits a projective resolution:
\begin{equation} \label{kosa}\cdots{\longrightarrow} P_n{\longrightarrow} P_{n-1} {\longrightarrow}
\cdots \longrightarrow P_1{\longrightarrow}
P_0{\longrightarrow}_A\mathbbm{k}{\longrightarrow}0
\end{equation} with $P_n$
generated in degree $n$ for each $n \geq 0$. We consider the third quadrant bicomplex:
$$\xymatrix{  & 0
 & 0  & 0   &  \\
\cdots \ar[r] & B\otimes_AP_2 \ar[u]
\ar[r] & B\otimes_AP_1 \ar[u]
\ar[r]&  B\otimes_AP_0\ar[r] \ar[u] &0 \\
\cdots \ar[r] &(B\oplus B)^\sigma\otimes_AP_2\ar[u] \ar[r]& (B\oplus B)^\sigma\otimes_AP_1 \ar[u]
\ar[r]& (B\oplus B)^\sigma\otimes_AP_0\ar[r] \ar[u]  &  0 \\
\cdots  \ar[r] &B^{\det_r \sigma}\otimes_AP_2
\ar[r]\ar[u] & B^{\det_r \sigma}\otimes_AP_1 \ar[u]
\ar[r]&  B^{\det_r \sigma}\otimes_AP_0\ar[r] \ar[u] &0\\
&0 \ar[u] & 0\ar[u]&  0 \ar[u] & }$$

It follows that $\det_r \sigma$ is an automorphism of $A$ and that $B$ is a right free $A$-module, $B^{\det_r \sigma}$ is projective as a right $A$-module.
Now, for the right $A$-module $(B\oplus B)^\sigma$, we are going to show it is also projective as a right $A$-module.
Since we have the following general result: if $M, P, Q$ are projective in the exact sequence
\begin{equation*}  0 \to M \to N\to P\to
Q  \to 0,
\end{equation*}
then so is $N$. For this end, we take $K$ to be the kernel of $P\to
Q$ and we get two short exact sequence
\begin{equation*}  0 \to M \to N\to K\to 0,\quad \quad \quad 0\to K\to P\to
Q  \to 0.
\end{equation*}
Since $P$ and $Q$ are projective, the second sequence is split and $K$ is projective. Therefore,  the first sequence is split and $N$ is projective.
Hence,  each term in the sequence \eqref{ab} is projective as a right $A$-module.
Further, all the rows of the bicomplex are exact except at the $(-1)$-st column.
Thus, the homology along the rows  yields a single nonzero column, that is,
\begin{equation} \label{abcom} \cdots\to 0 \to B^{\det_r\sigma}\otimes_A\mathbbm{k} {\to} (B\oplus B)^{\sigma}\otimes_A\mathbbm{k}{\to} B \otimes_A\mathbbm{k} \to 0.
\end{equation}
Moreover,  the sequence \eqref{ab} is a
split exact sequence. Therefore, the homology of \eqref{abcom} is $_BA\otimes_A\mathbbm{k}=_B\mathbbm{k}$.
Namely, the total complex of the bicomplex is a projective resolution of the $B$-module $_B\mathbbm{k}$.
Finally, both sequence \eqref{ab}  and \eqref{kosa} are linear resolutions, so is the total complex of the bicomplex. The proof is completed.
\end{proof}

\begin{remark}
(1). Theorem \ref{koszulity} generalizes  the well-known result that a graded
Ore extension of a Koszul algebra is again Koszul (see \cite[Corollary 1.3]{Ph12}).

(2). It was proved  in \cite[Theorem 0.1(b)]{ZZ09} that a graded double Ore extension of an AS-regular algebra of dimension 2 is Koszul. Since  an AS-regular algebra of dimension 2 is always Koszul, Theorem \ref{koszulity} generalizes \cite[Theorem 0.1(b)]{ZZ09} in the trimmed case.
\end{remark}

For  an AS-Gorenstein algebra $A$, J{\o}rgensen and Zhang proposed the notion of the homological determinant of a
graded automorphism in \cite{JZ00} in order to study the noncommutative invariant theory. Roughly speaking,  for an AS-Gorenstein
algebra $A$, the homological determinant, denoted $hdet$, is a homomorphism from  the graded automorphism group $\GrAut(A)$ of $A$ to the multiplicative group  $\mathbbm{k}\backslash \{0\}$ generalizing the usual determinant of a matrix. For the precise definition and its application, we refer to \cite{JZ00, RRZ14}. Here, we just need the following characterization of the homological determinant of an automorphism of a Koszul algebra.

\begin{proposition} \cite[Proposition 1.11]{WZ11} \label{hdk}
Let $A$ be a Koszul AS-regular algebra of global
dimension $d$. Suppose that $\theta$ is a graded automorphism of $A$ and
$\theta^*$ is its corresponding dual graded automorphism of the dual algebra $A^!$. Then, we have
$\theta^*(u)=(\hdet\theta)u$ for any $u\in\Ext_A^d(\mathbbm{k}, \mathbbm{k})$.\qed
\end{proposition}

Suppose that $A=T_\mathbbm{k}(V)/\langle R\rangle$ is a Koszul algebra. Let $\sigma:  A\rightarrow M_{2\times 2}(A)$ be an algebra homomorphism.
Then, $$\begin{pmatrix} \sigma_{11}^* & \sigma_{12}^* \\ \sigma_{21}^* & \sigma_{22}^*
\end{pmatrix}: V^*\rightarrow M_{2\times 2}(V^*)$$ defines a $\mathbbm{k}$-linear map, denoted by $\sigma^*$, where $\sigma_{ij}^*$ is the dual of $\sigma_{ij}$ on the space $V^*$ (see the paragraph before Proposition \ref{criteria}) for each pair $(i, j)$ with $i, j\in \{1, 2\}$.
Extend $\sigma^*$ to an algebra homomorphism $\sigma^*: T_\mathbbm{k}(V^*)\rightarrow M_{2\times 2}(T_\mathbbm{k}(V^*))$ by letting: $$\sigma^*(xy):=\sigma^*(x)\sigma^*(y)$$ for each $x, y\in V^*$.
In particular, for $e_i^*, e_j^*\in V^*$
\begin{align*} \sigma^*(e_i^*e_j^*)=&\begin{pmatrix} \sigma_{11}^*(e_i^*) & \sigma_{12}^*(e_i^*) \\ \sigma_{21}^*(e_i^*) & \sigma_{22}^*(e_i^*)
\end{pmatrix}\begin{pmatrix} \sigma_{11}^*(e_j^*) & \sigma_{12}^* (e_j^*)\\ \sigma_{21}^* (e_j^*)& \sigma_{22}^*(e_j^*)
\end{pmatrix}\\
=&\begin{pmatrix} \sigma_{11}^*(e_i^*)  \sigma_{11}^*(e_j^*) +\sigma_{12}^*(e_i^*)  \sigma_{21}^*(e_j^*)&
\sigma_{11}^*(e_i^*)  \sigma_{12}^*(e_j^*) +\sigma_{12}^*(e_i^*)  \sigma_{22}^*(e_j^*)\\
\sigma_{21}^*(e_i^*)  \sigma_{11}^*(e_j^*) +\sigma_{22}^*(e_i^*)  \sigma_{21}^*(e_j^*)&
\sigma_{21}^*(e_i^*)  \sigma_{12}^*(e_j^*) +\sigma_{22}^*(e_i^*)  \sigma_{22}^*(e_j^*)
\end{pmatrix}.
\end{align*}
For any $e_k, e_l\in V$,
\begin{align*}(\sigma_{11}^*(e_i^*)&  \sigma_{11}^*(e_j^*) +\sigma_{12}^*(e_i^*)  \sigma_{21}^*(e_j^*))(e_ke_l)\\
&\overset{\text{by} \eqref{rperp}}=
\sigma_{11}^*(e_i^*)(e_k)  \sigma_{11}^*(e_j^*)(e_l)+\sigma_{12}^*(e_i^*)(e_k) \sigma_{21}^*(e_j^*)(e_l)\\
&\overset{\text{by} \eqref{eqdual1}}=e_i^*(\sigma_{11}(e_k))e_j^*(\sigma_{11}(e_l))+e_i^*(\sigma_{12}(e_k))e_j^*(\sigma_{21}(e_l))\\
&\overset{\text{by} \eqref{rperp}}=e_i^*e_j^*(\sigma_{11}(e_k)\sigma_{11}(e_l)+\sigma_{12}(e_k)\sigma_{21}(e_l))\\
&=e_i^*e_j^*\big((\sigma_{11}\sigma_{11}+\sigma_{12}\sigma_{21})(e_ke_l)\big).
\end{align*}
Then,  $\sigma_{11}^*(r^\prime)\in R^\perp$ for any $r^\prime\in R^\perp$. For this end,
assuming that $r^\prime=\sum\limits_{i, j}c_{ij}e_i^*e_j^*\in R^\perp$, then for any  $r=\sum\limits_{k, l}d_{kl}e_ke_l\in R$ it follows from the above computation that
\begin{align*} \sigma_{11}^*(r^\prime)(r)&=\sum\limits_{i, j}c_{ij}(\sigma_{11}^*(e_i^*)\sigma_{11}^*(e_j^*) +\sigma_{12}^*(e_i^*)  \sigma_{21}^*(e_j^*))(\sum\limits_{k, l}d_{kl}e_ke_l)\\
&=\sum\limits_{i, j}c_{ij}e_i^*e_j^*\big((\sigma_{11}\sigma_{11}+\sigma_{12}\sigma_{21})(\sum\limits_{k, l}d_{kl}e_ke_l)\big)\\
&=r^\prime(\sigma_{11}(r)).
\end{align*}
Since $\sigma_{11}$ is an algebra endomorphism of $A=T_\mathbbm{k}(V)/\langle R\rangle$,  we obtain that $\sigma_{11}(r)\in R$. Hence, $\sigma_{11}^*(r^\prime)(r)=r^\prime(\sigma_{11}(r))=0$.  It is shown  that $\sigma_{11}^*(r^\prime)\in R^\perp$ for any $r^\prime\in R^\perp$.
That is, $\sigma_{11}^*$ induces an algebra endomorphism of $A^!=T_\mathbbm{k}(V^*)/\langle R^\perp\rangle$. Similarly, the same claims for $\sigma_{12}^*$, $\sigma_{21}^*$ and $\sigma_{22}^*$ hold by computation.
Furthermore,  $\sigma^*$ induces an
algebra homomorphism from  $A^!$ to $M_{2\times 2}(A^!)$. We still use the same notation $\sigma^*$ for this algebra homomorphism if no confusion occurs.  The following property is easy to check.


\begin{lemma} \label{invsp}
Let $A$ be a Koszul algebra and  $\sigma: A\rightarrow M_{2\times 2}(A)$  an algebra homomorphism. Then $\sigma$ is invertible ( in the sense of Definition \ref{inverse})  with  inverse $\phi$ if and only if $\sigma^*$ is invertible with  inverse $\phi^*$. Here both $\sigma^*$ and $\phi^*$ are algebra homomorphisms from $A^!$ to $M_{2\times 2}(A^!)$.
\end{lemma}

 Let  $x_0$ be  a base element of the highest nonzero component $A^!_d$, which is $1$-dimensional $\mathbbm{k}$-space, of $A^!$.  We assume that:
\begin{equation} \label{hdet}
\sigma^*(x_0)=\begin{pmatrix} Wx_0 & X x_0 \\ Yx_0 & Zx_0
\end{pmatrix}, \quad \phi^*(x_0)=\begin{pmatrix} W^\prime x_0 & X^\prime x_0 \\ Y^\prime x_0 & Z^\prime x_0
\end{pmatrix}
\end{equation} for some $W, X, Y, Z, W^\prime, X^\prime, Y^\prime, Z^\prime\in \mathbbm{k}$.

Inspired by  Proposition \ref{hdk}, we may introduce the following:

\begin{definition}  \label{hdett} Let $A$ be a Koszul AS-regular algebra. Suppose that $\sigma$ is an algebra homomorphism from $A$ to $M_{2\times 2}(A)$  and
$\sigma^*$ is its dual algebra homomorphism from $A^!$ to $M_{2\times 2}(A^!)$. The homological determinant of $\sigma$, denoted $\hdet\sigma$,  is defined by
$$\hdet\sigma:=\begin{pmatrix} W& X \\ Y& Z
\end{pmatrix},$$
where $W,X,Y$ and $Z$ are determined by (\ref{hdet}).
\end{definition}

%
The following property follows directly from Lemma \ref{invsp}.
\begin{lemma} \label{matrixin}
Let $A$ be a Koszul algebra and  $\sigma$  and $\phi$ be two algebra homomorphism from $A$ to $M_{2\times 2}(A)$ such that they are inverse of each other  in the sense of Definition \ref{inverse}. Then
$$\hdet\sigma(\hdet\phi)^t=I_2,$$
or equivalently,
$$\begin{pmatrix} W& X \\ Y& Z
\end{pmatrix}\begin{pmatrix} W^\prime & Y^\prime \\ X^\prime & Z^\prime
\end{pmatrix}=I_2 ,$$ where $M^t$ is the transpose of a matrix $M$ and $I_2$ is the $2\times 2$ identity matrix.\qed
\end{lemma}

\begin{example} \label{exhdet} Let $A$ be a Koszul AS-regular algebra and  $B=A_P[y_1, y_2; \sigma]$ be a trimmed double Ore extension of $A$ with $
\sigma=\begin{pmatrix} \tau & 0 \\ 0 & \xi
\end{pmatrix}$.
Then, both $\tau$ and $\xi$ are automorphisms of $A$ and $\tau\xi=\xi\tau$ (see Proposition \ref{iterated} for its proof). Moreover, $B$ is an iterated Ore extension of $A$ by \cite[Theorem 2.2]{CLM11}.
It is easy to see that
$$\hdet\sigma=\begin{pmatrix} \hdet\tau & 0 \\ 0 & \hdet\xi
\end{pmatrix}.$$
\end{example}

\section{Nakayama automorophisms}
In this section, we study the Yoneda Ext algebra of a trimmed double Ore extension of a Koszul AS-regular algebra,  and  compute the Nakayama automorphism of the trimmed double Ore extension. This leads to  the characterization of the Calabi-Yau property of a trimmed double Ore extension. As consequences,  we recover several known results on the Calabi-Yau property of a skew polynomial extension.

Throughout this section, $A=T_\mathbbm{k}(V)/\langle \,R\, \rangle$ is a Koszul AS-regular algebra of global dimension
$d$ with Nakayama automorphism $\nu$,  and $B=A_P[y_1, y_2; \sigma]$ is a trimmed double Ore extension of $A$,  where $\sigma=\begin{pmatrix} \sigma_{11} & \sigma_{12} \\ \sigma_{21} & \sigma_{22}
\end{pmatrix}$ is an algebra morphism subject to  (\ref{sigma}).  Let $\phi=\begin{pmatrix} \phi_{11} & \phi_{12} \\ \phi_{21} & \phi_{22}
\end{pmatrix}$  be the inverse of $\sigma$ in the sense of  (\ref{inverse}),
$\hdet\sigma=\begin{pmatrix} W& X \\ Y& Z
\end{pmatrix},$  and $\hdet\phi=\begin{pmatrix} W'& X' \\ Y'& Z'
\end{pmatrix}$ throughout this section. We choose
a basis  $\{e_1,  \cdots, e_n\}$  of $V$,  and let $\{e_1^*,  \cdots, e_n^*\}$ be  the corresponding dual basis of $V^*$.
For the the Frobenius algebra $A^!$, we fix  a base element $x_0$ of  the $1$-dimensional $\mathbbm{k}$-space $A_d^!$. By \cite[Lemma 3.2]{Sm96},   $A^!$ possesses a nondegenerate bilinear form given by
\begin{equation} \label{bilinear} \langle a, b\rangle=c_{ab}
\end{equation}
where $c_{ab}$ is the coefficient of $x_0$ in the product of $ab$.
We can pick a $\mathbbm{k}$-linear basis $\{\eta_1, \eta_2, \cdots, \eta_n\}$
of $A_{d-1}^!$ such that $e_i^*\eta_j=\delta_{ij} x_0$. Then
$\eta_ie_j^*=\lambda_{ij} x_0$ for some  $\lambda_{ij}\in \mathbbm{k}$. Or equivalently,
$$\langle e_i^*, \eta_j\rangle=\delta_{ij}, \quad \langle\eta_i, e_j^*\rangle=\lambda_{ij}$$ for $i,j=1, 2, \cdots,n$.
Then, it follows from \eqref{nakdef} that the Nakayama automorphism $\mu_{A^!}$ of $A^!$ is given by:
\begin{equation} \label{nakaa} \mu_{A^!}(e_i^*)=\sum\limits_{j} \lambda_{ji}e_j^*.
\end{equation}
Now  we  assume that the algebra homomorphism $\phi=\begin{pmatrix} \phi_{11} & \phi_{12} \\ \phi_{21} & \phi_{22}
\end{pmatrix}: A\rightarrow M_{2\times 2}(A)$  is given by
\begin{equation} \label{hpiij}\phi_{ij}(e_l)=\sum_k \phi_{ij}^{lk}e_k
\end{equation}   for each $l$, where  $\phi^{lk}_{ij}\in \mathbbm{k}$. Then, we have
\begin{equation} \label{hpistij}\phi_{ij}^*(e_l^*)=\sum_k \phi_{ij}^{kl}e_k^*.
\end{equation}

Now  $B$ is a Koszul algebra and it can be presented by generators and relations as  $B=T_\mathbbm{k}(V \oplus \mathbbm{k}y_1\oplus \mathbbm{k}y_2)/\langle \, R_B \rangle$, where  $R_B$ consists of
three  types of relations:
\begin{enumerate}
\item [(R1)] the relations defining $A$;

\item [(R2)] $y_2y_1-py_1y_2-qy_1^2$;

\item [(R3)] $\{y_je_i-\sigma_{j1}(e_i)y_1-\sigma_{j2}(e_i)y_2; j=1, 2, i=1, \cdots, n\}.$

\end{enumerate}

 Note that from Definition \ref{def} and Definition \ref{inverse} it follows that the relation $($R$3)$  is equivalent to
\begin{enumerate}

\item [(R3')] $\{e_i y_j-y_1\phi_{1j}(e_i)-y_2\phi_{2j}(e_i); j=1, 2, i=1, \cdots, n\}.$

\end{enumerate}

Let $C:=\mathbbm{k}\langle y_1, y_2\rangle/\langle y_2y_1-py_1y_2-qy_1^2\rangle(p\neq 0)$. We need  the following well-known property of  the algebra $C$.

\begin{proposition} \label{dext}  The algebra $C$ is Koszul AS-regular of dimension $2$. Its Yoneda Ext algebra $C^!$ is
$\mathbbm{k}\langle y_1^*, y_2^*\rangle/\langle(y_1^*)^2+qy_2^*y_1^*, y_1^*y_2^*+py_2^*y_1^*, (y_2^*)^2\rangle$.
\end{proposition}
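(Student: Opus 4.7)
The plan is to handle Proposition \ref{dext} in two independent pieces: first extract $C^!$ by a direct dualization of the single quadratic relation, then recognize $C$ itself as a trimmed double Ore extension of the base field so that the Koszulity and AS-regularity follow from results already established in this section.

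For the Koszul dual, I would write $V=\mathbbm{k}y_1\oplus\mathbbm{k}y_2$ with dual basis $\{y_1^*,y_2^*\}$, so that $R=\mathbbm{k}(y_2y_1-py_1y_2-qy_1^2)\subset V^{\otimes 2}$. Under the canonical pairing $\langle y_i^*\otimes y_j^*,\,y_k\otimes y_l\rangle=\delta_{ik}\delta_{jl}$, an element $\sum a_{ij}y_i^*y_j^*$ of $(V^*)^{\otimes 2}$ lies in $R^\perp$ if and only if $a_{21}-pa_{12}-qa_{11}=0$. Solving this single linear equation in the four coefficients yields a three-dimensional space spanned by $(y_1^*)^2+qy_2^*y_1^*$, $y_1^*y_2^*+py_2^*y_1^*$, and $(y_2^*)^2$, which is the claimed presentation of $C^!=T_{\mathbbm{k}}(V^*)/\langle R^\perp\rangle$.

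For the Koszul and AS-regular properties of $C$, I would take $A=\mathbbm{k}$ in Definition \ref{def}, let $\sigma:\mathbbm{k}\to M_{2\times 2}(\mathbbm{k})$ be the trivial unital map $\lambda\mapsto \lambda I_2$, and set $\delta=0$, $\tau=0$. The defining conditions of a trimmed right double Ore extension are then immediate (PBW freeness on $\{y_1^iy_2^j\}$ being visible from the rewriting rule $y_2y_1\mapsto py_1y_2+qy_1^2$), so $\mathbbm{k}_P[y_1,y_2;\sigma]=C$. Since $\mathbbm{k}$ is trivially Koszul, Proposition \ref{koszulity} delivers the Koszulity of $C$. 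Moreover $\sigma$ is its own inverse in the sense of Definition \ref{inverse} (the defining sums collapse to Kronecker $\delta$'s), so since $p\neq 0$, Lemma \ref{doeinv}(2) upgrades $C$ to a genuine double Ore extension of $\mathbbm{k}$; the unnamed lemma closing Section 1 then yields $\gldim C=\gldim\mathbbm{k}+2=2$.

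There is no serious obstacle, only a few bookkeeping checks: that the trivially defined $\sigma$ satisfies the invertibility condition, that $p\neq 0$ by hypothesis, and that $C$ is connected graded (immediate). As a sanity check and alternate route, one may instead view $C$ as the Ore extension $\mathbbm{k}[y_1][y_2;\alpha,\delta]$ with $\alpha(y_1)=py_1$ and $\delta(y_1)=qy_1^2$; classical Ore extension theory gives PBW freeness and AS-regularity of dimension $2$ directly, and Koszulity follows since the Koszul complex computed from the dual just above has length $2$.
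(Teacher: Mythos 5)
Your argument is correct, but note that the paper offers no proof of Proposition \ref{dext} at all: it is introduced with ``we list the following well-known property'' and left as a citation-free fact. So your write-up is strictly more than the paper provides. Both halves check out. The dualization is right: under the pairing $\langle y_i^*\otimes y_j^*, y_k\otimes y_l\rangle=\delta_{ik}\delta_{jl}$ (which is the convention consistent with the paper's stated presentation of $C^!$ and with equation \eqref{eqdual}), the single condition $a_{21}-pa_{12}-qa_{11}=0$ does yield exactly the three listed generators of the $3$-dimensional space $R^\perp$. The reduction of the Koszulity and regularity to the machinery of Sections 1--2 is also legitimate and non-circular: $\mathbbm{k}$ is Koszul and AS-regular of dimension $0$, the diamond lemma gives condition (iii) of Definition \ref{def} with no ambiguities to resolve, the scalar $\sigma$ is trivially invertible so Lemma \ref{doeinv}(2) (using $p\neq 0$) upgrades $C$ to a two-sided double Ore extension, and Proposition \ref{koszulity} together with \cite[Theorem 0.2]{ZZ08} then give Koszulity and $\gldim C=2$; none of these results depends on Proposition \ref{dext}, so there is no circularity. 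Your alternative route via the graded Ore extension $\mathbbm{k}[y_1][y_2;\alpha,\delta]$ with $\delta(y_1)=qy_1^2$ is the more classical one and is what most readers would supply; either suffices.
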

\begin{proof} The algebra is known as the Jordan plane $(q\neq 0)$ or quantum plane $(q= 0)$ which are both Koszul AS-regular of dimension $2$.
Its Yoneda Ext algebra $E(C):=\bigoplus_{i\in \mathbb{N}}\Ext_C^i(\mathbbm{k}, \mathbbm{k})$ is isomorphic to $C^!=T_\mathbbm{k}(V^*)/\langle \, R^\perp \,\rangle$, see \cite[Theorem 5.9]{Sm96}.
\end{proof}
Next, we can describe the algebra $B^!$ in terms of generators and relations. It is obvious that  $\{e_1^*, e_2^*, \cdots, e_n^*, y_1^*, y_2^*\}$  forms a $\mathbbm{k}$-linear basis of $B_1^!$.

\begin{lemma} \label{lem5a}   The algebra  $B^!$ is generated by elements $\{e_1^*,
e_2^*, \cdots, e_n^*, y_1^*, y_2^*\}$ with the relations:
\begin{enumerate}
\item [($\perp$1)] the relations for $A^!$;
\item [($\perp$2)] the relations for $C^!$;
\item [($\perp$3)] $\{y_j^*e_i^*+\phi_{j1}^*(e_i^*)y_1^*+\phi_{j2}^*(e_i^*)y_2^*; j=1, 2, i=1, \cdots, n\},$ where
$\phi$ is the inverse of $\sigma$.
\end{enumerate}
\end{lemma}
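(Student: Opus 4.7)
The plan is to compute the orthogonal complement $R_B^\perp$ inside $W^*\otimes W^*$, where $W=V\oplus U$ with $U=\mathbbm{k}y_1\oplus\mathbbm{k}y_2$. The direct-sum decomposition of $W$ splits $W\otimes W$ into the four blocks $V\otimes V$, $V\otimes U$, $U\otimes V$, $U\otimes U$, and likewise for $W^*\otimes W^*$; the natural pairing respects these blocks, so only correspondingly labeled blocks pair nontrivially. Hence it suffices to split $R_B$ according to the four blocks and compute the orthogonal complement in each block separately.

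The three families of generators listed just before the lemma split very cleanly. The relations of $A$ lie in $V\otimes V$ and contribute $R^\perp$, that is, the relations of $A^!$; the single relation $y_2y_1-py_1y_2-qy_1^2$ lies in $U\otimes U$ and, by Proposition~\ref{dext}, contributes the three relations of $C^!$; and each mixed relation $y_je_i-\sigma_{j1}(e_i)y_1-\sigma_{j2}(e_i)y_2$ has its first term in $U\otimes V$ and its other terms in $V\otimes U$, so it contributes purely to the mixed complement $(V^*\otimes U^*)\oplus(U^*\otimes V^*)$. The first two contributions immediately account for parts (1) and (2) of the lemma.

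The main content is to show that the orthogonal complement of the mixed relations is exactly the span of the $2n$ elements listed in part (3). I will expand the pairing of $y_{j'}^*e_{i'}^*+\sum_k\phi_{j'k}^*(e_{i'}^*)y_k^*$ with a mixed relation $y_je_i-\sum_\ell\sigma_{j\ell}(e_i)y_\ell$ block by block, apply the adjointness $\langle\phi_{j'k}^*(e_{i'}^*),\sigma_{jk}(e_i)\rangle=\langle e_{i'}^*,\phi_{j'k}(\sigma_{jk}(e_i))\rangle$, and then invoke the inverse identity $\sum_k\phi_{j'k}(\sigma_{jk}(e_i))=\delta_{jj'}e_i$ from Definition~\ref{inverse} to produce the cancellation $\delta_{jj'}\delta_{ii'}-\delta_{jj'}\delta_{ii'}=0$. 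A dimension count then closes the argument: the $2n$ type (3) relations are linearly independent inside the $4n$-dimensional mixed block of $W\otimes W$, so the orthogonal complement inside the mixed block of $W^*\otimes W^*$ has dimension $2n$, matching the $2n$ proposed elements, which are independent since their projections to $U^*\otimes V^*$ form the basis $\{y_j^*\otimes e_i^*\}$. The main step requiring care is the index transposition built into \eqref{eqdual} when unwinding $\phi_{j'k}^*$, so that the inverse identity is applied in exactly the form given in Definition~\ref{inverse} rather than its transpose.
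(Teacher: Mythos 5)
Your proof is correct and follows essentially the same route as the paper: both compute $(R_B)^\perp$ block by block with respect to the decomposition $W=V\oplus U$, and the key point in the mixed block is the inverse identity of Definition \ref{inverse}. The only cosmetic difference is that you check orthogonality of the $2n$ candidate elements and close with a dimension count, whereas the paper solves the orthogonality conditions directly to show that an arbitrary mixed element of $(R_B)^\perp$ lies in their span.
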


\begin{proof}  Since $B$ is Koszul,  we have $B^!=T_\mathbbm{k}(V^* \oplus \mathbbm{k}y_1^*\oplus \mathbbm{k}y_2^*)/\langle \, (R_B)^\perp \rangle$.  According to the defining relations of $B$,
it is easy to see that relations ($\perp$1) and ($\perp$2) belong to
$(R_B)^\bot$.  Now  we show that ($\perp$3) also belongs to $(R_B)^\bot$. It suffices to verify that for every $i,j$, we have:
$$\big(y_j^*e_i^*+\phi_{j1}^*(e_i^*)y_1^*+\phi_{j2}^*(e_i^*)y_2^*\big)(r)=0$$
for each $r\in R_B$ by the definition of $(R_B)^\bot$ for each $i, j$. But this is trivial  since the generating relations of $B$ are given by (R$1$), (R$2$) and (R$3$).

On the other hand, each element in $(V^*)^{\otimes 2}$ has the form $f+g+h$, where  $f=\sum_i k_i e_i^*
y_1^* +l_i e_i^*y_2^*+ m_i y_1^*e_i^* + n_i y_2^*e_i^*$, $g=\sum c_{ij}e_i^*e_j^*$ and $h=a(y_1^*)^2+by_1^*y_2^*+cy_2^*y_1^*+d(y_2^*)^2$. Assume that $f+g+h\in (R_B)^\bot$. Then, it is easy to see $g$ is in the span of $(\bot1)$ and $h$ is in the span of $(\bot2)$. For the rest,
we need to show that every element $f=\sum_i k_i e_i^*
y_1^* +l_i e_i^*y_2^*+ m_i y_1^*e_i^* + n_i y_2^*e_i^*\in (R_B)^\bot$ can be written as
$$f=\sum a_i(y_1^*e_i^*+\phi_{11}^*(e_i^*)y_1^*+\phi_{12}^*(e_i^*)y_2^*)+b_i(y_2^*e_i^*+\phi_{21}^*(e_i^*)y_1^*+\phi_{22}^*(e_i^*)y_2^*),$$
for $a_i, b_i\in \mathbbm{k}$. Firstly, we have
$$k_i=\sum_jm_j e_j^*(\phi_{11}(e_i))+n_j e_j^*(\phi_{21}(e_i))$$ and
$$l_i=\sum_jm_j e_j^*(\phi_{12}(e_i))+n_j e_j^*(\phi_{22}(e_i))$$
     for any $i$.
Further,
$$\sum_i e_j^*(\phi_{11}(e_i))e_i^*=\phi_{11}^*(e_j^*)$$
by the definition of $\phi_{11}^*$.  Hence, we have
\begin{align*} f=&\sum_jm_j\phi_{11}^*(e_j^*)y_1^*+n_j\phi_{21}^*(e_j^*)y_1^*\\
&+\sum_jm_j\phi_{12}^*(e_j^*)y_2^*+n_j\phi_{22}^*(e_j^*)y_2^*\\
&+\sum_i  m_i y_1^*e_i^* + n_i y_2^*e_i^*\\
=&\sum_i m_i(y_1^*e_i^*+\phi_{11}^*(e_i^*)y_1^*+\phi_{12}^*(e_i^*)y_2^*)\\
&+n_i(y_2^*e_i^*+\phi_{21}^*(e_i^*)y_1^*+\phi_{22}^*(e_i^*)y_2^*),
\end{align*}
which completes the proof.
\end{proof}

\begin{remark} \label{rem}
The third type of relation $(\bot3)$ of $B^!$ can be replaced by

\begin{enumerate}
\item [($\bot3^\prime$)] $\{e_i^*y_j^*+y_1^*\sigma_{1j}^*(e_i^*)+y_2^*\sigma_{2j}^*(e_i^*); j=1, 2, i=1, \cdots, n\}$
\end{enumerate}
since the relation R$3$ can be replaced by R$3^\prime$.
\end{remark}

\begin{proposition} \label{basis} Suppose that $A$ is a Koszul algebra and  $B=A_P[y_1, y_2; \sigma]$
is a trimmed  double Ore extension of $A$. Then,
\begin{enumerate}
\item [(1)] $A^!$ is a subalgebra of $B^!$;

\item [(2)] 
    $B^!$ is a free right (and left) $A^!$-module with a basis $\{1, y_1^*, y_2^*, y_1^*y_2^*\}$.
\end{enumerate}
\end{proposition}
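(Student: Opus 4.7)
The plan is to prove part (2) first and then deduce part (1) as a byproduct of the direct sum decomposition. The rewriting data is provided by Lemma \ref{lem5a}, while dimension control comes from Proposition \ref{koszulity}.

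First, I set up a Hilbert series identity. By Definition \ref{def}(1)(iii), $B$ is free as a left $A$-module on $\{y_1^iy_2^j:i,j\geq 0\}$ with $\deg y_1=\deg y_2=1$, so $H_B(t)=H_A(t)/(1-t)^2$. Proposition \ref{koszulity} gives Koszulity of $B$, hence $H_B(t)H_{B^!}(-t)=1$; combined with $H_A(t)H_{A^!}(-t)=1$ this yields
\[
H_{B^!}(t)=(1+t)^2\,H_{A^!}(t),
\]
so $\dim B^!_n=\dim A^!_n+2\dim A^!_{n-1}+\dim A^!_{n-2}$. Assuming $p\neq 0$ (which, via Proposition \ref{dext}, is what makes $y_1^*y_2^*$ nonzero in $B^!_2$), the right-hand series coincides exactly with the Hilbert series of the candidate module $A^!\oplus A^!y_1^*\oplus A^!y_2^*\oplus A^!y_1^*y_2^*$.

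Second, I span $B^!$ on the right. Relation (3) of Lemma \ref{lem5a} reads
\[
y_j^*e_i^*=-\phi_{j1}^*(e_i^*)y_1^*-\phi_{j2}^*(e_i^*)y_2^*,
\]
which pushes each $y_j^*$ one step to the right past an $e^*$-generator. Iterating on any monomial in the generators of $B^!$, every element of $B^!$ rewrites as $\sum_w a_w\,w$ with $a_w\in A^!$ and $w$ a word in $\{y_1^*,y_2^*\}$. The relations (2) of Lemma \ref{lem5a}, namely those of $C^!$, then reduce each such $w$ to one of $\{1,y_1^*,y_2^*,y_1^*y_2^*\}$ (using $y_2^*y_1^*=-p^{-1}y_1^*y_2^*$ and $(y_2^*)^2=0$). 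Hence $B^!=A^!+A^!y_1^*+A^!y_2^*+A^!y_1^*y_2^*$; by the dimension identity of the previous paragraph this sum is forced to be direct. In particular the natural map $A^!\to B^!$ identifies $A^!$ with the first summand, so it is injective, proving part (1), and $B^!$ is free of rank $4$ as a right $A^!$-module on $\{1,y_1^*,y_2^*,y_1^*y_2^*\}$.

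Third, for the left $A^!$-module decomposition I invoke the invertibility of $\sigma^*$ provided by Lemma \ref{invsp}. The $2n$-dimensional relation space from Lemma \ref{lem5a}(3) lies in the direct sum of the $e^*y^*$-part and the $y^*e^*$-part of $V_B^{*\otimes 2}$; its projection to the $e^*y^*$-part sends the given basis element to $\phi_{j1}^*(e_i^*)y_1^*+\phi_{j2}^*(e_i^*)y_2^*$, and this is an isomorphism of $2n$-dimensional spaces because $\phi^*$ is an invertible matrix-valued map on $V^*$. Consequently the same relation subspace admits an alternative basis of the form
\[
e_i^*y_j^*+(\text{linear combination of }y_k^*e_l^*)=0,
\]
which allows each $y^*$ to be pushed leftward. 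Repeating the rewriting scheme of paragraph two with these dual relations produces $B^!=A^!+y_1^*A^!+y_2^*A^!+y_1^*y_2^*A^!$, and Hilbert series matching again upgrades the sum to a direct one. This third step is the main obstacle: obtaining the dual-form presentation of the relations relies essentially on invertibility of $\sigma$, and without it the left-module statement would not follow by the same token as the right-module one.
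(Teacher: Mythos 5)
Your argument is correct, and although it shares the paper's opening move --- the identity $\H_{B^!}(t)=(1+t)^2\H_{A^!}(t)$ coming from the left $A$-freeness of $B$ and the Koszulity of $A$ and $B$ --- the rest runs along a genuinely different track. The paper obtains part (1) from an equivalence of three conditions modelled on \cite[Theorem 2.6]{LSV96}, and obtains part (2) by forming the intermediate algebra $E=(A^!\coprod\mathbbm{k}\langle Y_1,Y_2\rangle)/(\text{type (3) relations})$, asserting that $E$ is $A^!$-free on all words in $Y_1,Y_2$, and then proving that the three $C^!$-type quadratic elements are normal in $E$. You instead straighten directly: the type (3) relations of Lemma \ref{lem5a} push each $y_j^*$ rightward past the $e_i^*$'s (this terminates since the number of inversions strictly decreases), the $C^!$ relations collapse the resulting $y^*$-tails onto $\{1,y_1^*,y_2^*,y_1^*y_2^*\}$ (your explicit use of $p\neq 0$ is legitimate, being forced by the standing hypothesis that $B$ is a two-sided connected graded double Ore extension), and the Hilbert series then upgrades the spanning sum to a direct one, delivering injectivity of $A^!\to B^!$ as a byproduct rather than as a separate step. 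Your handling of the left-module half is also more explicit than the paper's ``it is not hard to see'': invertibility of $\phi^*$ (Lemma \ref{invsp}) makes the projection of the $2n$-dimensional relation space onto the $V^*\otimes(\mathbbm{k}y_1^*\oplus\mathbbm{k}y_2^*)$ component an isomorphism, which is exactly what produces the dual rewriting rules needed to push the $y^*$'s leftward. Your route buys self-containedness (no appeal to normal elements or to the unproved equivalences (i)--(iii)); the paper's route buys a structural presentation of $B^!$ as an iterated quotient mirroring the construction of $B$ itself.
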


\begin{proof}  The statement (1) is a consequence of Lemma \ref{lem5a}.
Moreover, there is a surjective algebra homomorphism $\pi: A^!\coprod C^!\rightarrow B^!$ from
the coproduct of $A^!$ and  $C^!$ to $B^!$. Hence, as a left $A^!$-module, $B^!$ is generated by $1, y_1^*, y_2^*$ and $y_1^*y_2^*$.
By Lemma \ref{lem5a} and Remark \ref{rem}, the kernel of $\pi$ is the ideal generated by
$$\{e_i^*y_j^*+y_1^*\sigma_{1j}^*(e_i^*)+y_2^*\sigma_{2j}^*(e_i^*); j=1, 2, i=1, \cdots, n\}.$$
Therefore, the elements  $1, y_1^*, y_2^*$ and $y_1^*y_2^*$ are also the generators of $B^!$ as a right $A^!$-module.

Next, since $B$ is a free left $A$-module with basis $\{y_1^iy_2^j;i, j\geq 0\}$ by definition, the Hilbert series of $B$
is equal to the Hilbert series of $A\otimes \mathbbm{k}[y_1, y_2]$, i.e.,
$$\H_B(t)=\frac{\H_A(t)}{(1-t)^2}.$$
It is well known that there is a functional equation on Hilbert series
$$\H_{S}(t)\H_{S^!}(-t)=1$$
for any Koszul algebra $S$. Since both $A$ and $B$ are Koszul algebras by Theorem \ref{koszulity}, so we have
\begin{equation} \label{hilbert}\H_{B^!}(t)=(1+t)^2\H_{A^!}(t).\end{equation}
Therefore,  $B^!$ is a free left(also right) $A^!$-module,
with  a basis $\{1, y_1^*, y_2^*, y_1^*y_2^*\}$.
\end{proof}


In order to compute the Nakayama automorphism of $B^!$, we need the following:


\begin{lemma}\label{exts} With notations and assumptions as in the second paragraph of this section, we have

$(1)$  $\varepsilon:=x_0 y_1^*y_2^*$ is a basis element of the
$1$-dimensional space $B_{d+2}^!$.

$(2)$   For any $1\leq i, j\leq n$ and $m=1, 2$, the  following equations hold:

\quad \quad $\begin{cases}
e_i^*\eta_jy_1^*y_2^*\overset{(a)}=\delta_{ij}\varepsilon, & \eta_iy_1^*y_2^*e_j^*\overset{(b)}=\sum\limits_{k, l}(\frac{q}{p}\phi_{21}^{kj}\phi_{11}^{lk}\!-\!\frac{1}{p}\phi_{21}^{kj}\phi_{12}^{lk}
\!+\phi_{22}^{kj}\phi_{11}^{lk})\lambda_{il}\varepsilon,    \\
e_i^*x_0 y_m^*\overset{(c)}=0, & x_0 y_m^* e_i^*\overset{(d)}=0,     \\
y_1^*x_0 y_2^*\overset{(e_1)}=(-1)^dW^\prime\varepsilon, & y_1^*x_0 y_1^*\overset{(e_2)}=(-1)^d(\frac{q}{p}W^\prime-\frac{1}{p}X^\prime)\varepsilon, \\
y_2^*x_0 y_2^*\overset{(e_3)}=(-1)^dY^\prime\varepsilon, &y_2^*x_0 y_1^*\overset{(e_4)}=(-1)^d(\frac{q}{p}Y^\prime-\frac{1}{p}Z^\prime)\varepsilon,\\
x_0 y_1^*y_1^*\overset{(f_1)}=\frac{q}{p}\varepsilon, &x_0 y_1^*y_2^*\overset{(f_2)}=\varepsilon,\\
x_0 y_2^*y_1^*\overset{(f_3)}=-\frac{1}{p}\varepsilon, &x_0 y_2^*y_2^*\overset{(f_4)}=0,\\
 y_m^*\eta_jy_1^*y_2^*\overset{(g)}=0, & \eta_jy_1^*y_2^*y_m^*\overset{(h)}=0,
\end{cases}$\\
 where $W', X',Y'$ and $Z'$ are given in (\ref{hdet}).
\end{lemma}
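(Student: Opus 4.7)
I would handle parts (1) and (2) as immediate consequences of the left-$A^!$-module decomposition $B^! = A^! \oplus A^! y_1^* \oplus A^! y_2^* \oplus A^! y_1^* y_2^*$ of Proposition \ref{basis}(2), combined with the fact that $A^!$ is Frobenius of top degree $d$ (so $A^!_i = 0$ for $i > d$, $A^!_d = \mathbbm{k}\delta$, and $A^!_{d-1} = \bigoplus_i \mathbbm{k}\eta_i$). In degree $d+2$ only the summand $A^!_d y_1^* y_2^* = \mathbbm{k}\varepsilon$ survives; in degree $d+1$ the three summands $A^!_d y_1^*$, $A^!_d y_2^*$, and $A^!_{d-1} y_1^* y_2^*$ survive, yielding the stated bases.

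For part (3) the engine is a straightening identity that I would prove first: for every homogeneous $c \in A^!_n$,
\[ y_j^* c \;=\; (-1)^n \sum_{k} \phi_{jk}^*(c)\, y_k^*. \]
The base case $n=1$ is exactly the defining relation from Lemma \ref{lem5a}(3), and the inductive step uses that $\phi^*$ is an algebra homomorphism (the two sign flips from moving $y_j^*$ past each factor combine with $\phi^*(cc') = \phi^*(c)\phi^*(c')$). Applied to $c = \delta$ with the scalars $W', X', Y', Z'$ defined in \eqref{hdet}, and combined with the $C^!$-relations $(y_1^*)^2 = \frac{q}{p}\, y_1^* y_2^*$, $y_2^* y_1^* = -\frac{1}{p}\, y_1^* y_2^*$, $(y_2^*)^2 = 0$ supplied by Proposition \ref{dext}, this immediately yields all four identities $(e_i)$; the same $C^!$-relations give the $(f_i)$ directly by left-multiplying by $\delta$.

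The remaining identities are either zero by degree reasons or routine reductions. Identity (a) is just $e_i^* \eta_j = \delta_{ij}\delta$ multiplied on the right by $y_1^* y_2^*$; (c) uses $e_i^* \delta \in A^!_{d+1} = 0$; (d) applies the degree-$1$ straightening to $y_m^* e_i^*$ and then vanishes because $\delta \cdot A^!_{\geq 1} = 0$; and (g), (h) collapse because each of $y_k^*\, y_1^* y_2^*$ and $y_1^* y_2^*\, y_m^*$ reduces to $0$ in $C^!$ via the same three relations. The only genuinely multi-step calculation is (b): push $e_j^*$ successively past $y_2^*$ and then past $y_1^*$ using the degree-$1$ straightening, expand each $\phi_{ab}^*(e_k^*) = \sum_l \phi_{ab}^{lk} e_l^*$, reduce the resulting $y_\alpha^* y_\beta^*$ monomials via the $C^!$-relations, and finally use $\eta_i e_l^* = \lambda_{il}\delta$ to collapse the sum into a scalar multiple of $\varepsilon$. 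I expect the only real obstacle to be notational rather than conceptual---keeping the four-index contractions organised so that the three surviving $y_\alpha^* y_\beta^*$ monomials assemble into precisely the expression stated in (b).
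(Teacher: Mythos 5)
Your proposal is correct and, for part (3), follows essentially the same route as the paper: the ``straightening identity'' $y_j^*c=(-1)^n\sum_k\phi_{jk}^*(c)y_k^*$ you isolate is exactly the inline computation the paper performs for $c=\delta$ (writing $\delta=e_j^*\eta_j$ and pushing the column vector $(y_1^*,y_2^*)^T$ through one generator at a time, using that $\phi^*$ is an algebra homomorphism), and your treatments of (a)--(d), (f), (g), (h) and the multi-step reduction (b) coincide with the paper's. The one genuine divergence is part (2): you read the basis of $B^!_{d+1}$ directly off the graded free-module decomposition $B^!=A^!\oplus A^!y_1^*\oplus A^!y_2^*\oplus A^!y_1^*y_2^*$ of Proposition \ref{basis}(2), whereas the paper instead proves linear independence by pairing the candidate vectors against $e_i^*$, $y_1^*$, $y_2^*$ via the Frobenius form (using equations (a), (c), (e$_1$)--(e$_4$), (g) and Lemma \ref{matrixin}) and then counts dimensions with $\H_{B^!}(t)=(1+t)^2\H_{A^!}(t)$. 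Your route is shorter and perfectly valid given the freeness statement; the paper's pairing argument has the side benefit of setting up exactly the data needed for the Nakayama computation in Proposition \ref{nakayaab}, but logically either suffices.
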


\begin{proof} Part (1) is obvious by Proposition \ref{basis} (2).
Since $e_i^*\eta_j=\delta_{ij}x_0$, we have
$$e_i^*\eta_jy_1^*y_2^*=\delta_{ij}x_0 y_1^*y_2^*=\delta_{ij}\varepsilon.$$
So Equation (a) holds.  Since
$A^!\rightarrow B^!$ is injective, Equation (c)  holds
naturally.  Equation (d) holds due to the relation ($\bot$3) of Lemma \ref{lem5a}.  Equations (g) and (h) follow from the relations ($\bot$2) and ($\bot$3) of Lemma \ref{lem5a}.
As  for Equation (b), by relation ($\bot$3) of Lemma \ref{lem5a} and Proposition \ref{dext}, we have:
\begin{align*} y_1^*y_2^*e_j^*&=-y_1^*(\phi_{21}^*(e_j^*)y_1^*+\phi_{22}^*(e_j^*)y_2^*)\\
&=-\sum_k(\phi_{21}^{kj}y_1^*e_k^*y_1^*+\phi_{22}^{kj}y_1^*e_k^*y_2^*)\\
&=\sum_k(\phi_{21}^{kj}\phi_{11}^*(e_k^*)y_1^*y_1^*+\phi_{21}^{kj}\phi_{12}^*(e_k^*)y_2^*y_1^*+\phi_{22}^{kj}\phi_{11}^*(e_k^*)y_1^*y_2^*)\\
&=\sum_{k, l}(\phi_{21}^{kj}\phi_{11}^{lk}e_l^*y_1^*y_1^*+\phi_{21}^{kj}\phi_{12}^{lk}e_l^*y_2^*y_1^*
+\phi_{22}^{kj}\phi_{11}^{lk}e_l^*y_1^*y_2^*)\\
&=\sum_{k, l}(\frac{q}{p}\phi_{21}^{kj}\phi_{11}^{lk}-\frac{1}{p}\phi_{21}^{kj}\phi_{12}^{lk}
+\phi_{22}^{kj}\phi_{11}^{lk})e_l^*y_1^*y_2^*.
\end{align*}
Thus, for each $i$
\begin{align*}
\eta_iy_1^*y_2^*e_j^*=&\sum_{k, l}(\frac{q}{p}\phi_{21}^{kj}\phi_{11}^{lk}-\frac{1}{p}\phi_{21}^{kj}\phi_{12}^{lk}
+\phi_{22}^{kj}\phi_{11}^{lk})\eta_ie_l^*y_1^*y_2^*\\
=&\sum_{k, l}(\frac{q}{p}\phi_{21}^{kj}\phi_{11}^{lk}-\frac{1}{p}\phi_{21}^{kj}\phi_{12}^{lk}
+\phi_{22}^{kj}\phi_{11}^{lk})\lambda_{il}\varepsilon,
\end{align*} where the second  equation follows from $\eta_ie_l^*=\lambda_{il}x_0$ by the assumption.
Next, we show  the rest equations. For a fixed $j$,  suppose that $\eta_j=\sum\limits_m
\lambda_me^*_{m_1}e^*_{m_2}\cdots e^*_{m_{d-1}}$,  where
$\lambda_m\in \mathbbm{k}$. Then,
\begin{align*} \begin{pmatrix} y_1^* \\  y_2^*
\end{pmatrix}x_0=&\begin{pmatrix} y_1^* \\  y_2^*
\end{pmatrix}e_j^*\eta_j\\
=&\sum\limits_m\lambda_m\begin{pmatrix} y_1^* \\  y_2^*
\end{pmatrix}e_j^*e^*_{m_1}e^*_{m_2}\cdots e^*_{m_{d-1}}\\
=&-\sum\limits_m\lambda_m \phi^*(e_j^*)\begin{pmatrix} y_1^* \\  y_2^*\end{pmatrix}e^*_{m_1}e^*_{m_2}\cdots e^*_{m_{d-1}}\\
=&(-1)^2\sum\limits_m\lambda_m \phi^*(e_j^*)\phi^*(e^*_{m_1})\begin{pmatrix} y_1^* \\  y_2^*\end{pmatrix}e^*_{m_2}\cdots e^*_{m_{d-1}}\\
=&\cdots\\
=&(-1)^d\sum\limits_m\lambda_m \phi^*(e_j^*)\phi^*(e^*_{m_1})\phi^*(e^*_{m_2})\cdots \phi^*(e^*_{m_{d-1}})\begin{pmatrix} y_1^* \\  y_2^*\end{pmatrix}\\
=&(-1)^d\sum\limits_m\lambda_m \phi^*(e_j^*e^*_{m_1}e^*_{m_2}\cdots e^*_{m_{d-1}})\begin{pmatrix} y_1^* \\  y_2^*\end{pmatrix}\\
=&(-1)^d\phi^*(e_j^*\sum\limits_m\lambda_m e^*_{m_1}e^*_{m_2}\cdots e^*_{m_{d-1}})\begin{pmatrix} y_1^* \\  y_2^*\end{pmatrix}\\
=&(-1)^d\phi^*(e_j^*\eta_j)\begin{pmatrix} y_1^* \\  y_2^*\end{pmatrix}
=(-1)^d\phi^*(x_0)\begin{pmatrix} y_1^* \\  y_2^*\end{pmatrix}.
\end{align*}
It follows from  the definition of $\phi^*$ that we obtain:
$$
\begin{pmatrix} y_1^* \\  y_2^*
\end{pmatrix}x_0=(-1)^d\begin{pmatrix} \phi_{11}^*(x_0) & \phi_{12}^*(x_0) \\ \phi_{21}^*(x_0) & \phi_{22}^*(x_0)
\end{pmatrix}\begin{pmatrix}y_1^* \\  y_2^*
\end{pmatrix}=(-1)^d\begin{pmatrix}W^\prime x_0 y_1^*+X^\prime x_0 y_2^* \\  Y^\prime x_0 y_1^*+Z^\prime x_0 y_2^*
\end{pmatrix}.$$
Thus, we have proved  Equations $(e_i), i=1,2,3,4$.  Finally, the equations $(f_i), i=1,\cdots ,4$ and $(g), (h)$ follow from Proposition \ref{dext}.
\end{proof}

Since $B^!$ is Frobenius, we may  apply the Frobenius pair  \eqref{bilinear}  on  the equations in Lemma \ref{exts}(2).

\begin{corollary} \label{value} The following equations hold:
$$\begin{cases}
\langle e_i^*, \eta_jy_1^*y_2^*\rangle\overset{(a^\prime)}= \delta_{ij}, \quad\, &\langle\eta_iy_1^*y_2^*, e_j^*\rangle\overset{(b^\prime)}=\sum\limits_{k, l}(\frac{q}{p}\phi_{21}^{kj}\phi_{11}^{lk}\!-\!\frac{1}{p}\phi_{21}^{kj}\phi_{12}^{lk}
\!+\phi_{22}^{kj}\phi_{11}^{lk})\lambda_{il},\\
\langle e_i^*, x_0 y_m^*\rangle\overset{(c^\prime)}= 0, \quad &\langle x_0 y_m^*, e_i^*\rangle\overset{(d^\prime)}=0,\\
\langle y_1^*, x_0 y_2^*\rangle\overset{(e_1\prime)}=(-1)^dW^\prime, & \langle y_1^*, x_0 y_1^*\rangle\overset{(e_2^\prime)}=(-1)^d(\frac{q}{p}W^\prime-\frac{1}{p}X^\prime), \\
\langle y_2^*, x_0 y_2^*\rangle\overset{(e_3\prime)}=(-1)^dY^\prime, &\langle y_2^*, x_0 y_1^*\rangle\overset{(e_4\prime)}=(-1)^d(\frac{q}{p}Y^\prime-\frac{1}{p}Z^\prime),\\
\langle x_0 y_1^*, y_1^*\rangle\overset{(f_1^\prime)}=\frac{q}{p}, &\langle x_0 y_1^*, y_2^*\rangle\overset{(f_2^\prime)}=1,\\
\langle x_0 y_2^*, y_1^*\rangle\overset{(f_3^\prime)}=-\frac{1}{p}, &\langle x_0 y_2^*, y_2^*\rangle\overset{(f_4^\prime)}=0,\\
\langle y_m^*, \eta_jy_1^*y_2^*\rangle\overset{(g^\prime)}=0, \quad&\langle\eta_jy_1^*y_2^*, y_m^*\rangle\overset{(h^\prime)}=0.
\end{cases}$$
\end{corollary}

\begin{corollary}\label{basises} The vector set $\{\eta_1y_1^*y_2^*, \eta_2y_1^*y_2^*, \cdots, \eta_ny_1^*y_2^*, x_0 y_1^*, x_0 y_2^*\}$ forms a $\mathbbm{k}$-linear basis of $B^!_{d+1}$.
\end{corollary}
\begin{proof} Suppose that:
$$a_1\eta_1y_1^*y_2^*+\cdots+a_n\eta_ny_1^*y_2^*+b_1 x_0 y_1^*+b_2x_0 y_2^*=0$$
for some coefficients  $a_1, \cdots, a_n, b_1, b_2\in \mathbbm{k}$.  For each $i=1, 2, \cdots, n$, we have
\begin{align*} 0=&\langle e_i^*, a_1\eta_1y_1^*y_2^*+\cdots+a_n\eta_ny_1^*y_2^*+b_1 x_0 y_1^*+b_2 x_0 y_2^*\rangle\\
=&\sum\limits_{j=1}^na_j\langle e_i^*, \eta_jy_1^*y_2^*\rangle+b_1\langle e_i^*, x_0 y_1^*\rangle+b_2\langle e_i^*, x_0 y_2^*\rangle\\
=&a_i. \quad \quad\quad\quad \quad\quad\quad \quad\quad\quad\quad\quad\,\quad\quad\text{(by   Equations $(a^\prime)$  and $(c^\prime)$)}
\end{align*}
Similarly,  we have:
\begin{align*} 0=&\langle y_1^*, a_1\eta_1y_1^*y_2^*+\cdots+a_n\eta_ny_1^*y_2^*+b_1 x_0 y_1^*+b_2 x_0 y_2^*\rangle\\
=&\sum\limits_{j=1}^na_j\langle y_1^*, \eta_jy_1^*y_2^*\rangle+b_1\langle y_1^*, x_0  y_1^*\rangle+b_2\langle y_1^*, x_0 y_2^*\rangle\\
=& b_1(-1)^d(\frac{q}{p}W^\prime-\frac{1}{p}X^\prime)+b_2(-1)^dW^\prime, \quad\quad\text{(by  Equations  $(g^\prime)$,  $(e_1^\prime)$ and $(e_2^\prime)$)}.
\end{align*}
and
$$ b_1(-1)^d(\frac{q}{p}Y^\prime-\frac{1}{p}Z^\prime)+b_2(-1)^dY^\prime=0$$
obtained in a similar way.
So we obtain  a system of linear equations:
$$\begin{cases}
(\frac{q}{p}W^\prime-\frac{1}{p}X^\prime)b_{1}+W^\prime b_{2}&=0,\\
(\frac{q}{p}Y^\prime-\frac{1}{p}Z^\prime) b_{1}+ Y^\prime b_{2}&=0.
\end{cases}$$
The determinant of the  matrix $\begin{pmatrix} \frac{q}{p}W^\prime-\frac{1}{p}X^\prime & W^\prime\\\frac{q}{p}Y^\prime-\frac{1}{p}Z^\prime & Y^\prime
\end{pmatrix}$ is nonzero by Lemma \ref{matrixin}. Hence, $b_1=b_2=0$. Thus,
the vectors $\eta_1y_1^*y_2^*, \eta_2y_1^*y_2^*, \cdots, \eta_ny_1^*y_2^*, x_0  y_1^*$ and $x_0 y_2^*$ are linear independent. On the other hand, by Equation $\eqref{hilbert}$, we have $\dim B^!_{d+1}=2\dim A^!_{d}+ \dim A^!_{d-1}=n+2$.
That is, these vectors form a $\mathbbm{k}$-linear basis of $B^!_{d+1}$.
\end{proof}

Now, we are ready to compute the  Nakayama automorphism $\mu$ of the Frobenius algebra $B^!$. This automorphism  is determined by the equation $$\langle a, b\rangle=\langle \mu(b), a\rangle$$
for any $a, b\in B^!$ (see \eqref{nakdef}).
Note  that $B^!$ is generated by the degree $1$ elements:  $e_1^*,
e_2^*, \cdots, e_n^*, y_1^*, y_2^*$.  Hence, we just need  to describe the images of those elements under the Nakayama automorphism.  By Corollary \ref{basises}, we see that \\
$\{\eta_1y_1^*y_2^*, \eta_2y_1^*y_2^*, \cdots, \eta_ny_1^*y_2^*, x_0 y_1^*, x_0 y_2^*\}$ forms a basis of $B^!_{d+1}$.  Due to the fact that the Nakayama automorphism is graded, we can use the equations in Corollary \ref{value}  to determine  the Nakayama automorphism.

\begin{proposition} \label{nakayaab} The restriction of the Nakayama automorphism $\mu_{B^!}$  of $B^!$ to $A^!$ equals $\mu_{A^!}(\det_l\phi)^*$.
\end{proposition}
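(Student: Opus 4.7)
The strategy is to use the defining property of the Nakayama automorphism of the Frobenius algebra $B^!$ via the bilinear form $\langle-,-\rangle$ on $B^!$, which picks off the coefficient of the top-degree basis element $\varepsilon=\delta y_1^*y_2^*$ (Lemma \ref{exts}(1)). Concretely, I want to determine $\mu_{B^!}(e_j^*)$ by testing it against the basis $\{\eta_i y_1^*y_2^*,\ \delta y_1^*,\ \delta y_2^*\}$ of $B^!_{d+1}$ (Lemma \ref{exts}(2)), using the identity $\langle\mu_{B^!}(e_j^*),c\rangle=\langle c,e_j^*\rangle$.

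First I would write the unknown $\mu_{B^!}(e_j^*)=\sum_k\alpha_k e_k^*+\beta_1 y_1^*+\beta_2 y_2^*$ in the degree-$1$ basis of $B^!$. Testing against $c=\delta y_m^*$ for $m=1,2$ kills the $\alpha_k$ terms by equation (c), while the right-hand side $\langle\delta y_m^*,e_j^*\rangle$ vanishes by equation (d). The relations (e$_1$)--(e$_4$) then give a $2\times 2$ linear system in $\beta_1,\beta_2$ whose coefficient matrix is essentially $\begin{pmatrix}W'&X'\\Y'&Z'\end{pmatrix}$ (up to signs and the factor $1/p$); Lemma \ref{matrixin} ensures this matrix is invertible, forcing $\beta_1=\beta_2=0$. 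Thus $\mu_{B^!}$ preserves $A^!\subset B^!$ on degree $1$, from which it follows (by multiplicativity and the fact that $A^!$ is generated in degree $1$) that $\mu_{B^!}$ restricts to an automorphism of $A^!$.

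Next I would test against $c=\eta_i y_1^*y_2^*$. Equation (a) gives $\langle e_k^*,\eta_iy_1^*y_2^*\rangle=\delta_{ki}$, and equation (g) gives $\langle y_m^*,\eta_iy_1^*y_2^*\rangle=0$; hence the left side reduces to $\alpha_i$. For the right side $\langle\eta_iy_1^*y_2^*,e_j^*\rangle$ I would apply the long computation (b), which produces the coefficient
\[
\alpha_i=\sum_{k,l}\Bigl(\tfrac{q}{p}\phi_{11}^{lk}\phi_{21}^{kj}+\phi_{11}^{lk}\phi_{22}^{kj}-\tfrac{1}{p}\phi_{12}^{lk}\phi_{21}^{kj}\Bigr)\lambda_{il}.
\]
Now I would recognize the inner sum over $k$ as precisely the $(l,j)$-entry of the matrix of $\det_l\phi=\tfrac{q}{p}\phi_{21}\phi_{11}+\phi_{22}\phi_{11}-\tfrac{1}{p}\phi_{21}\phi_{12}$ (Proposition \ref{determ}(2)) acting on the basis $\{e_i\}$ of $V$. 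Dualizing via \eqref{eqdual} transposes this matrix, giving $(\det_l\phi)^*(e_j^*)=\sum_l N_{lj}e_l^*$ where $N_{lj}$ is the inner sum. Composing with $\mu_{A^!}(e_l^*)=\sum_k\lambda_{kl}e_k^*$ from \eqref{nakaa} yields exactly $\sum_k\bigl(\sum_l N_{lj}\lambda_{kl}\bigr)e_k^*$, matching the computed $\alpha_k$.

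The main obstacle I anticipate is purely bookkeeping: keeping the index conventions consistent between the matrix action on $V$, the transposed action on $V^*$ encoded in \eqref{eqdual}, and the particular index pattern $\phi_{21}^{kj}\phi_{11}^{lk}$ produced by equation (b) of Lemma \ref{exts}. Once that combinatorial identification is made, the proposition follows immediately from comparing $\alpha_k$ with the formula for $\mu_{A^!}\circ(\det_l\phi)^*$ applied to $e_j^*$, since both agree on the generating subspace $V^*\subset A^!$ and both are algebra automorphisms of $A^!$.
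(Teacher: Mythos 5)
Your proposal is correct and follows essentially the same route as the paper: both compute $\mu_{B^!}(e_j^*)$ by pairing against the basis $\{\eta_i y_1^*y_2^*,\ \delta y_1^*,\ \delta y_2^*\}$ of $B^!_{d+1}$ via \eqref{nakdef}, using equations (a)--(d) and (g) of Lemma \ref{exts} and then matching the resulting coefficients with the matrix of $\mu_{A^!}\circ(\det_l\phi)^*$ through Proposition \ref{determ}(2) and \eqref{eqdual}. Your explicit verification that $\beta_1=\beta_2=0$ via Lemma \ref{matrixin} is a detail the paper leaves implicit, but the argument is the same.
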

\begin{proof}
Suppose that
$$\mu_{B^!}(e_i^*)=k_{i, 1}e_1^*+\cdots+k_{i, n}e_n^*+k_{i, n+1}y_1^*+k_{i, n+2}y_2^*.$$
Since $\langle-, -\rangle$ is a Frobenius pair,
$$\langle x_0 y_m^*, e_i^*\rangle=\langle \mu_{B^!}(e_i^*), x_0 y_m^*\rangle$$
for $m=1, 2$.  From  Equations ($d^\prime$) and ($c^\prime$) in Corollary \ref{value},  we obtain:
\begin{align*}
0=&\langle \mu_{B^!}(e_i^*), x_0 y_m^*\rangle\\
=&\langle k_{i, 1}e_1^*+\cdots+k_{i, n}e_n^*+k_{i, n+1}y_1^*+k_{i, n+2}y_2^*, x_0  y_m^*\rangle\\
=&\langle k_{i, n+1}y_1^*+k_{i, n+2}y_2^*, x_0  y_m^*\rangle\\
=&k_{i, n+1}\langle y_1^*, x_0 y_m^*\rangle+k_{i, n+2}\langle y_2^*, x_0 y_m^*\rangle
\end{align*}
From Equations  ($e_1^\prime$)-($e_4^\prime$) in Corollary \ref{value} , we obtain the following system of linear equations:
$$\begin{cases}
&(\frac{q}{p}W^\prime-\frac{1}{p}X^\prime)k_{i, n+1}+(\frac{q}{p}Y^\prime-\frac{1}{p}Z^\prime)k_{i, n+2}=0,\\
&W^\prime k_{i, n+1}+ Y^\prime k_{i, n+2}=0,
\end{cases}$$
Since the determinant of the  matrix $\begin{pmatrix} \frac{q}{p}W^\prime-\frac{1}{p}X^\prime & \frac{q}{p}Y^\prime-\frac{1}{p}Z^\prime \\ W^\prime & Y^\prime
\end{pmatrix}$ is nonzero by Lemma \ref{matrixin},  we have:
$$ k_{i, n+1}=0= k_{i, n+2}$$
for each $i$.
Following  the definition of the Nakayama automorphism (see \eqref{nakdef}) and Equations ($a^\prime$) and ($b^\prime$),  we arrive at:
$$\mu_{B^!}(e_i^*)=\sum\limits_{j}(\sum\limits_{k, l}(\frac{q}{p}\phi_{21}^{ki}\phi_{11}^{lk}\!-\!\frac{1}{p}\phi_{21}^{ki}\phi_{12}^{lk}
\!+\phi_{22}^{ki}\phi_{11}^{lk})\lambda_{il})e_j^*.$$
On the other hand,  we claim that
$$(\det_l\phi)^*(e_i^*)=\sum\limits_{k, l}(\frac{q}{p}\phi_{21}^{ki}\phi_{11}^{lk}\!-\!\frac{1}{p}\phi_{21}^{ki}\phi_{12}^{lk}
\!+\phi_{22}^{ki}\phi_{11}^{lk})e_l^*.$$
Since for any $e_m$,
\begin{align*} (\det_l\phi)^*(e_i^*)(e_m)=&e_i^*(\det_l\phi(e_m))\\
=&e_i^*(\frac{q}{p}\phi_{21}\circ\phi_{11}(e_m)+\phi_{22}\circ\phi_{11}(e_m)-\frac{1}{p}\phi_{21}\circ\phi_{12}(e_m))\\
=&e_i^*(\frac{q}{p}\phi_{21}(\sum\limits_k\phi_{11}^{mk}e_k)+\phi_{22}(\sum\limits_k\phi_{11}^{mk}e_k)-\frac{1}{p}\phi_{21}(\sum\limits_k\phi_{12}^{mk}e_k))\\
=&e_i^*(\frac{q}{p}\sum\limits_{k, l}\phi_{11}^{mk}\phi_{21}^{kl}e_l+\sum\limits_{k, l}\phi_{11}^{mk}\phi_{22}^{kl}e_l-\frac{1}{p}\sum\limits_{k, l}\phi_{12}^{mk}\phi_{21}^{kl}e_l)\\
=&\frac{q}{p}\sum\limits_{k}\phi_{11}^{mk}\phi_{21}^{ki}+\sum\limits_{k}\phi_{11}^{mk}\phi_{22}^{ki}-\frac{1}{p}\sum\limits_{k}\phi_{12}^{mk}\phi_{21}^{ki},
\end{align*}
which coincides the value of $\sum\limits_{k, l}(\frac{q}{p}\phi_{21}^{ki}\phi_{11}^{lk}\!-\!\frac{1}{p}\phi_{21}^{ki}\phi_{12}^{lk}
\!+\phi_{22}^{ki}\phi_{11}^{lk})e_l^*(e_m)$.

It follows that
\begin{align*}
\mu_{A^!}(\det_l\phi)^*(e_i^*)=&\mu_{A^!}(\sum\limits_{k, l}(\frac{q}{p}\phi_{21}^{ki}\phi_{11}^{lk}\!-\!\frac{1}{p}\phi_{21}^{ki}\phi_{12}^{lk}
\!+\phi_{22}^{ki}\phi_{11}^{lk})e_l^*)\\
=&\sum\limits_{k, l}(\frac{q}{p}\phi_{21}^{ki}\phi_{11}^{lk}\!-\!\frac{1}{p}\phi_{21}^{ki}\phi_{12}^{lk}
\!+\phi_{22}^{ki}\phi_{11}^{lk})\mu_{A^!}(e_l^*)\\
=&\sum\limits_{k, l}(\frac{q}{p}\phi_{21}^{ki}\phi_{11}^{lk}\!-\!\frac{1}{p}\phi_{21}^{ki}\phi_{12}^{lk}
\!+\phi_{22}^{ki}\phi_{11}^{lk})\sum\limits_{j}\lambda_{jl}e_j^*.
\end{align*}
That is,  $\mu_{B^!}(e_i^*)=\mu_{A^!}(\det_l\phi)^*(e_i^*)$, for all $i$.
\end{proof}

We need the following technical result although the proof is obvious.
\begin{lemma}  \label{tech} Let $E=\mathbbm{k}\oplus E_1\oplus\cdots\oplus E_m$ be a graded Frobenius algebra which is generated in degree $1$.
Suppose that $\{\alpha_1, \alpha_2\}$ and $\{\beta_1, \beta_2\}$ are $\mathbbm{k}$-linear bases of $E_1$ and $E_{m-1}$ respectively.
Let
$$\begin{cases}
\langle \alpha_1, \beta_1 \rangle= a, \quad &\langle\beta_1, \alpha_1\rangle=e,\\
\langle \alpha_1, \beta_2 \rangle= b, \quad &\langle\beta_2, \alpha_1\rangle=f,\\
\langle \alpha_2, \beta_1 \rangle= c, \quad &\langle\beta_1, \alpha_2\rangle=g,\\
\langle \alpha_2, \beta_2 \rangle= d, \quad &\langle\beta_2, \alpha_2\rangle=h.
\end{cases}$$
Then, the Nakayama automorphism of $E$ is given by:
\begin{align*}\mu(\alpha_1)&=\frac{de-cf}{ad-bc}\alpha_1+\frac{af-be}{ad-bc}\alpha_2,\\
\mu(\alpha_2)&=\frac{dg-ch}{ad-bc}\alpha_1+\frac{ah-bg}{ad-bc}\alpha_2.
 \end{align*}
 \end{lemma}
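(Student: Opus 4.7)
The plan is to exploit the defining relation \eqref{nakdef}, namely $\langle a,b\rangle=\langle\mu(b),a\rangle$, to translate the claim into a small linear algebra problem. Since $\mu$ is a graded algebra automorphism of $E$, it preserves the degree decomposition, and in particular $\mu(E_1)\subseteq E_1$. Therefore we can write
\begin{equation*}
\mu(\alpha_1)=x_1\alpha_1+y_1\alpha_2,\qquad \mu(\alpha_2)=x_2\alpha_1+y_2\alpha_2,
\end{equation*}
for unique scalars $x_i,y_i\in\mathbbm{k}$, and the entire task is to identify these coefficients.

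Next I would apply \eqref{nakdef} with $b=\alpha_i$ and $a=\beta_j$, which yields $\langle\beta_j,\alpha_i\rangle=\langle\mu(\alpha_i),\beta_j\rangle$. Expanding the right-hand side bilinearly using the hypothesized values $\langle\alpha_k,\beta_j\rangle$ produces, for $i=1$, the linear system
\begin{equation*}
ax_1+cy_1=e,\qquad bx_1+dy_1=f,
\end{equation*}
and, for $i=2$, the analogous system with right-hand side $(g,h)$.

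The coefficient matrix $\bigl(\begin{smallmatrix}a&c\\ b&d\end{smallmatrix}\bigr)$ is the Gram matrix of the pairing $E_1\times E_{m-1}\to E_m\cong\mathbbm{k}$ induced by the Frobenius form, and it is nonsingular by the nondegeneracy of the Frobenius pairing restricted to these top/bottom pieces. Hence $ad-bc\neq 0$, and Cramer's rule immediately gives
\begin{equation*}
x_1=\tfrac{de-cf}{ad-bc},\quad y_1=\tfrac{af-be}{ad-bc},\quad x_2=\tfrac{dg-ch}{ad-bc},\quad y_2=\tfrac{ah-bg}{ad-bc},
\end{equation*}
matching the formulas in the statement. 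There is no real obstacle here; the only subtle point is justifying that $\mu$ is graded (so that $\mu(\alpha_i)$ lies in $E_1$) and that the restricted pairing $E_1\times E_{m-1}\to\mathbbm{k}$ is nondegenerate, both of which are standard consequences of $E$ being a graded Frobenius algebra concentrated between degrees $0$ and $m$.
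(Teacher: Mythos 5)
Your proof is correct and follows essentially the same route as the paper, which simply observes that nondegeneracy of the Frobenius pairing forces $ad-bc\neq 0$ and leaves the resulting Cramer's-rule computation implicit. You have merely written out the linear system $\langle\beta_j,\alpha_i\rangle=\langle\mu(\alpha_i),\beta_j\rangle$ explicitly, and your solution matches the stated formulas.
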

\begin{proof} Note that the Frobenius pair $\langle-, -\rangle$ is
a nondegenerate bilinear form. It follows that $ad-bc\neq 0$. Since the Nakayama automorphism is graded and $E$ is generated in degree $1$,  the Nakayama automorphism is determined by the assumed equations. we are only to determine the image of elements of degree 1. The conclusion follows from a direct computation.
\end{proof}

\begin{proposition} \label{nakac} The image of $y_1^*$ and $y_2^*$ under Nakayama automorphism $\mu_{B^!}$ are given as follows:
\begin{align*}\mu_{B^!}(y_1^*)=&(-1)^{d+1}\big((qX+\frac{q}{p}X+\frac{1}{p}W)y_1^*+ (qZ+\frac{q}{p}Z+\frac{1}{p}Y)y_2^*\big),\\
\mu_{B^!}(y_2^*)=&(-1)^{d+1}(pXy_1^*+ pZy_2^*).\end{align*}
where $W,X,Y$ and $Z$ form the  homological determinant of  $\sigma$.
\end{proposition}
\begin{proof} The proof is similar to the one of Proposition \ref{nakayaab}.
 Suppose that
$$\mu_{B^!}(y_1^*)=k_{1}e_1^*+\cdots+k_{n}e_n^*+k_{n+1}y_1^*+k_{n+2}y_2^*.$$
Since the equation $\langle \eta_jy_1^*y_2^*, y_1^*\rangle=\langle \mu_{B^!}(y_1^*), \eta_jy_1^*y_2^*\rangle$,   where  $j=1, 2, \cdots, n$,  we have:
\begin{align*} 0=&\langle \eta_jy_1^*y_2^*, y_1^*\rangle\\
=&\langle \mu_{B^!}(y_1^*), \eta_jy_1^*y_2^*\rangle\\
=&\sum\limits_{i=1}^{n}k_i\langle e_i, \eta_jy_1^*y_2^*\rangle +k_{n+1}\langle y_1^*, \eta_jy_1^*y_2^*\rangle+ k_{n+2}\langle y_2^*, \eta_jy_1^*y_2^*\rangle\\
=&\sum\limits_{i=1}^{n}k_i\delta_{ij}=k_j.
\end{align*}
It follows that $\mu_{B^!}(y_1^*)=k_{n+1}y_1^*+k_{n+2}y_2^*$.  Similarly, $\mu_{B^!}(y_2^*)=l_{n+1}y_1^*+l_{n+2}y_2^*$ for some $l_{n+1}, l_{n+2}\in \mathbbm{k}$. Hence, both $\mu_{B^!}(y_1^*)$ and $\mu_{B^!}(y_2^*)$ are completely determined by the values  in   Equations ($e_1^\prime$)-($e_4^\prime$) and ($f_1^\prime$)-($f_4^\prime$) in Corollary \ref{value}.
Thus,   we arrive at the case of Lemma \ref{tech}.  It follows that
\begin{align*}\mu_{B^!}(y_1^*)=&(-1)^d(\frac{qY^\prime+\frac{q}{p}Y^\prime-\frac{1}{p}Z^\prime}{W^\prime Z^\prime-X^\prime Y^\prime}y_1^*+ \frac{-qW^\prime-\frac{q}{p}W^\prime+\frac{1}{p}X^\prime}{W^\prime Z^\prime-X^\prime Y^\prime}y_2^*),\\
\mu_{B^!}(y_2^*)=&(-1)^d(\frac{pY^\prime}{W^\prime Z^\prime-X^\prime Y^\prime}y_1^*+ \frac{-pW^\prime}{W^\prime Z^\prime-X^\prime Y^\prime}y_2^*).
\end{align*}
Finally, the statement follows from  the  equation:
$$\begin{pmatrix} W& X \\ Y& Z
\end{pmatrix}=\dfrac{1}{W^\prime Z^\prime-X^\prime Y^\prime}\begin{pmatrix} Z^\prime & -Y^\prime \\ -X^\prime & W^\prime
\end{pmatrix},$$  a consequence of Lemma \ref{matrixin}.
\end{proof}

\begin{proposition} \label{nakaab}
The restriction of the Nakayama automorphism $\nu_{B}$ of
$B$ to $A$ equals $(\det_r\sigma)^{-1}\nu$, and
$$
\nu_{B}\begin{pmatrix}y_1 \\  y_2
\end{pmatrix}=(\hdet\sigma)\mathbb{P}^{-1}\begin{pmatrix}y_1 \\  y_2
\end{pmatrix},
$$  where $\mathbb{P}$ is given by (\ref{matrixp}).
\end{proposition}
\begin{proof}  By Proposition \ref{nakayaab} and Proposition \ref{nakac}, the restriction of Nakayama automorphism
$\mu_{B^!}$ to  $B^!_1$ has the form  $\begin{pmatrix}  Q_1& 0 \\ 0& Q_2
\end{pmatrix}$, where $Q_1\in M_d(\mathbbm{k})$ and $Q_2\in M_2(\mathbbm{k})$.
By Proposition \ref{criteria} and Equation \ref{eqdual}, the Nakayama automorphism of $B$ is also of this type.
Combining Proposition \ref{criteria}, Proposition \ref{nakayaab} and Proposition \ref{determ}(3) we  obtain the first statement.  By Proposition \ref{criteria} and Proposition \ref{nakac}, we have:
 \begin{align*}\nu_{B}(y_1)=&(qX+\frac{q}{p}X+\frac{1}{p}W)y_1+ pXy_2,\\
\nu_{B}(y_2)=&(qZ+\frac{q}{p}Z+\frac{1}{p}Y)y_1+ pZy_2.\end{align*}
Thus, the second conclusion follows from the definition of the homological determinant of $\sigma$ in Definition \ref{hdett} and Equation \ref{matrixp}.
\end{proof}

Now we are ready to characterize the Calabi-Yau property of a trimmed double Ore extension of a Koszul AS-regular algebra.

\begin{theorem} \label{nakab} Suppose that $A$ is a Koszul AS-regular algebra with Nakayama automorphism $\nu$. Let $B=A_P[y_1, y_2; \sigma]$ be a trimmed  double Ore extension of $A$.
Then $B$ is Calabi-Yau if and only if  $\det_r\sigma=\nu$ and $\hdet\sigma=\mathbb{P}$.
\end{theorem}
\begin{proof} Since $B$ is Koszul and is of finite global dimension,  the Koszul $B^e$-bimodule complex provides a finitely
generated  projective resolution of $B$ of finite length. That is, $B$ is homologically smooth. Because $B$ is connected graded, its only  inner automorphism is the identity. So for $B$ to be Calabi-Yau, its Nakayama automorphism must be the identity.  Therefore, the statement is a consequence of Proposition \ref{nakaab}.
\end{proof}

\begin{remark} \label{existunique}
For a Koszul AS-regular algebra $A$ with Nakayama automorphism $\nu$, there exists a unique skew polynomial extension $D$ such that $D$ is Calabi-Yau,  see \cite{GK14, GYZ14, HVZ13, LWW12, RRZ14}. Here, we  consider
the existence and the uniqueness of a Calabi-Yau trimmed double Ore extension of a Koszul AS-regular algebra.

$(1)$. For any Koszul AS-regular algebra $A$ with Nakayama automorphism $\nu$,  consider the trimmed double Ore extension
$B=A_P[y_1, y_2; \sigma]$ with $P=(1, 0)$ and $
\sigma=\begin{pmatrix} \nu & 0 \\ 0 & id
\end{pmatrix}$. Then $B$ is Calabi-Yau. But it is easy to see that $B$ is an iterated Ore extension
of $A$ (see \cite[Proposition 3.6]{ZZ09} or its proof). Hence, we ask if there exists a nontrivial
double Ore extension $B$ (not an iterated one) such that $B$ is Calabi-Yau?  The answer is negative from the following example.

Let $A=\mathbbm{k}\langle x_1, x_2\rangle/(x_2x_1-x_1x_2-x_1^2)$ be the Jordan plane. Its Nakayama automorphism $\nu$ is given by
$\nu(x_1)=x_1$ and $\nu(x_2)=2x_1+x_2$. Then,
there is only one nontrivial double Ore extension by the classification in \cite{ZZ09}, namely,
the type $\mathbb{H}:=A_P[y_1, y_2; \sigma]$ with $P=(-1, 0)$ and $\sigma$  given by the matrix
$\begin{pmatrix} 0 & h& 0 & 0 \\
 h & 0& 0 & 0 \\
 0 & hf& 0 & h \\
 hf & 0 & h & 0
\end{pmatrix}$ with $0\neq h\in \mathbbm{k}$ and $f\in \mathbbm{k}$.
Now, $\det_r(\sigma)$ is an automorphism given by $\det_r(\sigma)(x_1)=h^2x_1$ and $\det_r(\sigma)(x_2)=2h^2fx_1+h^2x_2$.
Let $x_0$ to be a base element of the 1-dimensional space $A^!_2$. Then
$$\sigma^*(x_0)=\begin{pmatrix} h^{2}x_0 & 0 \\
 0 & h^{2}x_0
\end{pmatrix}.$$
That is, $W=h^{2}, X=0, Y=0$ and $Z=h^{2}$.
By Proposition \ref{nakaab}, the Nakayama automorphism of  $\mathbb{H}$ is
\begin{align*} \nu:& x_1\to h^{-2}x_1\\
&x_2\to h^{-2}((2-2f)x_1+x_2)\\
& y_1\to -h^{2}y_1\\
&y_2 \to -h^{2}y_2.
\end{align*}
Therefore, there is no Calabi-Yau algebra in the class of the type $\mathbb{H}$.

$(2)$. For the uniqueness, let $A=\mathbbm{k}\langle x_1, x_2\rangle/(x_2x_1+x_1x_2)$ be the quantum plane whose Nakayama automorphism is given by $\nu(x_1)=-x_1$ and $\nu(x_2)=-x_2$. Suppose that $B:=A_P[y_1, y_2; \sigma]$ with $P=(-1, 0)$, where $\sigma$ is given by the matrix
$\begin{pmatrix} 0 & 0 & -g & f \\
 0 & 0 & f & -g \\
 g & f & 0 & 0 \\
 f & g & 0 & 0
\end{pmatrix}$ with $f, g\in \mathbbm{k}$ and $f^2\neq g^2$. So $B$ is of type $\mathbb{N}$ in the classification of \cite{ZZ09}.
Now, $\det_r(\sigma)$ is an automorphism given by $\det_r(\sigma)(x_1)=(f^2-g^2)x_1$ and $\det_r(\sigma)(x_2)=(f^2-g^2)x_2$.  Let $x_0$ be a base element of the 1-dimensional space $A^!_2$. Then we have:
$$\sigma^*(x_0)=\begin{pmatrix} (f^2-g^2)x_0 & 0 \\
 0 & (f^2-g^2)x_0
\end{pmatrix}.$$
In this case, $W=f^2-g^2, X=0, Y=0$ and $Z=f^2-g^2$.
Thus, the Nakayama automorphism of $B$ is
equal to $(g^2-f^2)\id$ by Proposition \ref{nakaab}.
Hence, $B$ is Calabi-Yau if and only if $g^2-f^2=1$. Therefore,
a trimmed double Ore extension, which is Calabi-Yau, of a Koszul AS-regular algebra may not be unique if it exists.

\end{remark}

\begin{remark} In the first example in Remark \ref{existunique}, we know that $\det_r\sigma=\nu_A$ if and only if $h^2=f=1$. Moreover, $\hdet\sigma=\begin{pmatrix} h^{2} & 0 \\
 0 & h^{2}
\end{pmatrix}.$ But, $\mathbb{P}=\begin{pmatrix} -1 & 0 \\
 0 & -1
\end{pmatrix}$.
Therefore,  the condition $\det_r\sigma=\nu_A$ and the condition $\hdet\sigma=\mathbb{P}$
 in Theorem \ref{nakab} are independent. More examples can be constructed from iterated Ore extensions, see Example \ref{exhdet}.
\end{remark}

To end this section, we return to discuss the Nakayama automorphism and the Calabi-Yau property of
the skew polynomial extension. For a twisted Calabi-Yau algebra $A$ with Nakayama automorphism $\nu$, it
was proved in \cite[Theorem 3.3]{LWW12} that the Nakayama automorphism of an Ore extension $D=A[t; \theta, \delta]$ is given by
$$\nu_{D}(x)=\begin{cases} \theta^{-1}\circ\nu(x), & x \in A;
  \\
 ax+b, & x=t,
\end{cases}$$
for some $a, b\in A$ with $a$ invertible. It was also remarked there that if $\delta=0$, then $\nu_{D}(t)=at$.
Now if we restrict to Koszul algebras, we can describe the Nakayama automorphism more explicitly as follows.

\begin{proposition} \label{nakare}  Suppose that $A$ is a Koszul AS-regular algebra with Nakayama automorphism $\nu$, $\theta$ is a graded algebra automorphism of $A$ and $D=A[t; \theta]$.
The  Nakayama automorphism $\nu_{D}$ of
$D$  is given by:
$$\nu_{D}(x)=\begin{cases} \theta^{-1}\circ\nu(x), & x \in A
  \\
 (\hdet\theta)x, & x=t.
\end{cases}$$
\end{proposition}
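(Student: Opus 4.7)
The plan is to adapt the Koszul-dual machinery of Section 2 to this simpler one-variable setting, since the double Ore extension argument is completely parallel once the appropriate presentation of $D^!$ is in hand.

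First, by a bicomplex argument analogous to Proposition \ref{koszulity}, applied to the $D$-$A$-bimodule resolution $0 \to D^\theta \to D \to A \to 0$ (the one-variable analog of \eqref{ab}, with first arrow $r \mapsto rt$ and the right $A$-action on $D^\theta$ twisted by $\theta$), the algebra $D = A[t;\theta]$ is Koszul and AS-regular of global dimension $d+1$. Writing $D = T_\mathbbm{k}(V \oplus \mathbbm{k}t)/\langle R,\, te_i - \theta(e_i)t\rangle$ and taking orthogonals of the new quadratic relations, one obtains the presentation
\[
D^! \;=\; A^!\langle t^*\rangle \Big/ \Big((t^*)^2,\ t^* a + (\theta^{-1})^*(a)\,t^*\ \text{for}\ a \in V^*\Big).
\]
Inductively, $t^* a = (-1)^{\deg a}(\theta^{-1})^*(a)\,t^*$ for every homogeneous $a \in A^!$; hence $D^! = A^! \oplus A^! t^*$ as a $\mathbbm{k}$-space, and $D^!_{d+1}$ is one-dimensional, spanned by $\varepsilon := \delta\,t^*$, where $\delta$ generates $A^!_d$.

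Next, I would compute the Nakayama automorphism $\mu_{D^!}$ of the Frobenius algebra $D^!$ by evaluating the Frobenius pairing $\langle x,y\rangle$ (the coefficient of $\varepsilon$ in $xy$) on the bases $\{\delta,\eta_1 t^*,\ldots,\eta_n t^*\}$ of $D^!_d$ and $\{e_1^*,\ldots,e_n^*,t^*\}$ of $D^!_1$. The essential inputs are $\delta\,t^* = \varepsilon$, $t^*\,\delta = (-1)^d(\hdet\theta)^{-1}\varepsilon$ (from Proposition \ref{hdk}), $(t^*)^2 = 0$, and $\eta_j e_k^* = \lambda_{jk}\delta$. Solving the defining relation $\langle a,b\rangle = \langle\mu_{D^!}(b),a\rangle$ then yields
\[
\mu_{D^!}(t^*) = (-1)^d\,(\hdet\theta)\,t^*, \qquad \mu_{D^!}\big|_{V^*} = -\,\mu_{A^!}\circ(\theta^{-1})^*\big|_{V^*}.
\]

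Finally, by Proposition \ref{criteria} applied to $D$, $\nu_D = \epsilon^{d+2}\mu_{D^!}^*$. On $t$ the action is scalar and the signs combine as $(-1)^{d+2}(-1)^d = 1$, yielding $\nu_D(t) = (\hdet\theta)\,t$. On $V$, combining with the corresponding formula $\nu = \epsilon^{d+1}\mu_{A^!}^*$ for $A$ itself and using that dualization reverses composition, the identity $\mu_{D^!}|_{V^*} = -\mu_{A^!}\circ(\theta^{-1})^*|_{V^*}$ translates into $\nu_D|_V = \theta^{-1}\circ\nu|_V$, and this extends to all of $A$ by multiplicativity.

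The main obstacle is the careful sign bookkeeping: the $(-1)^d$ arising from $t^*\delta = (-1)^d(\hdet\theta)^{-1}\varepsilon$ must cancel exactly with the $(-1)^{d+2}$ in the Van den Bergh formula of Proposition \ref{criteria}. A subsidiary point is verifying that $A^! \hookrightarrow D^!$ is injective (used when concluding $e_i^*\delta = 0$ in $D^!$), which follows from the Hilbert series identity $H_{D^!}(t) = (1+t)H_{A^!}(t)$ exactly as in Proposition \ref{basis}(1).
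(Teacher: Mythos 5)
Your proposal is correct and follows essentially the same route as the paper: present $D^!$ as $A^!\oplus A^!t^*$ with the relations $(t^*)^2=0$ and $t^*e_i^*=-(\theta^{-1})^*(e_i^*)t^*$, read off the Frobenius pairing against $\varepsilon=\delta t^*$ (in particular $t^*\delta=(-1)^d(\hdet\theta)^{-1}\varepsilon$), obtain $\mu_{D^!}(t^*)=(-1)^d(\hdet\theta)t^*$ and $\mu_{D^!}|_{A^!}=-\mu_{A^!}\circ(\theta^{-1})^*$, and transfer back via Proposition \ref{criteria}. The sign bookkeeping you flag works out exactly as you describe, matching the paper's computation.
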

\begin{proof} We only give a sketch of the proof since it is similar to the one of Proposition \ref{nakaab}.
Suppose that $D=T(V \oplus \mathbbm{k} \, t)/\langle \, R_D \rangle$. The generating relations in $D$ are of two
types: $te_i-\theta(e_i)t\, (1 \leq i \leq n)$ and the relations from $A$.
Obviously, $\{e_1^*,
e_2^*, \cdots, e_n^*, t^*\}$ forms a $\mathbbm{k}$-linear basis for $D_1^!$.
By \cite[Proposition
2.4]{LSV96},  the defining relations for $D^!$ consist of the following three types:
\begin{enumerate}
\item [(1)] the relations from $A^!$;

\item [(2)] $\{t^*e_i^*+(\theta^{-1})^*(e_i^*)t^* \mid 1 \leq i \leq n\}$;

\item [(3)] $\{(t^*)^2\}$.
\end{enumerate}

By \cite[Proposition 2.5]{LSV96}, $D^!$ is a free $A^!$-module with
basis $\{1, t^*\}$. Hence, $x_0 t^*$ is  a base element of the
$1$-dimensional space $D_{d+1}^!$, denoted  $\varepsilon$, where $x_0$ is  a base element of  the $1$-dimensional $\mathbbm{k}$-space $A_d^!$.   Now let
$(b_{ij})_{n\times n}$ be the matrix of the restriction of $\theta^{-1}$ to $V$, i. e.,
\begin{equation} \label{nakainv} \theta^{-1}(e_i)=\sum_j b_{ij}e_j
\end{equation}
 for each $i$.
Then, we have
\begin{enumerate}
\item [(1)]\,\,\, $\{\eta_1t^*, \eta_2t^*, \cdots, \eta_nt^*, x_0\}$ is a $\mathbbm{k}$-linear
basis of $D^!_d$;
\item [(2)]\,\,\, the  following equations hold:
$$\begin{cases}
\, e_i^*\eta_jt^*=\delta_{ij}\varepsilon, \quad\quad\quad\quad
&\quad \eta_it^*e_j^*=-\sum_kb_{kj}\lambda_{ik}\varepsilon,\\
\, e_i^*x_0=0, & \quad x_0 e_i^*=0,\\
\, t^*x_0=(-1)^d(\hdet(\theta))^{-1}\varepsilon,  & \quad x_0 t^*=\varepsilon,\\
\, t^*\eta_jt^*=0, & \quad\eta_jt^*t^*=0.
\end{cases}$$
\end{enumerate}
Using  the same argument in the proof of Proposition \ref{nakayaab} and Proposition \ref{nakac}, one obtains that  the Nakayama automorphism $\mu_{D^!}$ of
$D^!$  is given by:
$$\mu_{D^!}(\alpha)=\begin{cases} -\mu_{A^!}\circ(\theta^{-1})^*(\alpha), & \alpha \in A^!
  \\
 (-1)^d(\hdet\theta)\alpha, & \alpha=t^*.
\end{cases}$$
The last step is to transfer $\mu_{D^!}$  to the Nakayama automorphism $\nu_{D}$ of
$D$ by  Proposition \ref{criteria}.
\end{proof}

Note that the homological determinant of the Nakayama automorphism of a Koszul AS-regular algebra is equal to $1$ \cite[Theorem 0.4]{RRZ14}. Thus, we arrive at the following result which was proved in \cite{GK14, GYZ14, HVZ13, LWW12, RRZ14}:

\begin{theorem} \label{orecy}
Suppose that $A$ is a Koszul AS-regular algebra with Nakayama automorphism $\nu$, $\theta$ is a graded algebra automorphism of $A$ and $D=A[t; \theta]$. Then, $D$ is Calabi-Yau if and only if $\theta=\nu$. \qed
\end{theorem}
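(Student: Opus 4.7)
The plan is to obtain the theorem as a direct corollary of Proposition~\ref{nakare}, exactly paralleling how Theorem~\ref{nakab} was extracted from Proposition~\ref{nakaab}. The engine is the explicit formula
\[
\nu_{D}(x)=\theta^{-1}\nu(x) \text{ for } x\in A, \qquad \nu_{D}(t)=(\hdet\theta)\,t,
\]
together with two standard observations: (i) $D=A[t;\theta]$ is homologically smooth, since $D$ is itself a Koszul AS-regular algebra of global dimension $\gldim A+1$ (the Koszul bimodule complex gives a finite projective resolution of $D$ over $D^{e}$), and (ii) $D$ is a connected graded algebra, so every inner automorphism of $D$ is the identity. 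Consequently $D$ is Calabi-Yau if and only if $\nu_{D}=\id_{D}$.

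First I would establish the forward direction. Assuming $D$ is Calabi-Yau, the condition $\nu_{D}|_{A}=\id_{A}$ forces $\theta^{-1}\nu=\id_{A}$, i.e.\ $\theta=\nu$. The auxiliary condition $\nu_{D}(t)=t$ forces $\hdet(\theta)=1$, but under $\theta=\nu$ this reduces to $\hdet(\nu)=1$, which is automatic and imposes no further constraint.

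For the converse, assume $\theta=\nu$. Then $\nu_{D}|_{A}=\nu^{-1}\nu=\id_{A}$ immediately. For the action on $t$, I would invoke the fact from \cite[Theorem~0.4]{RRZ13} (cited just above the theorem) that the Nakayama automorphism of a Koszul AS-regular algebra has homological determinant $1$; hence $\hdet(\theta)=\hdet(\nu)=1$ and $\nu_{D}(t)=t$. Combining the two, $\nu_{D}=\id_{D}$, so $D$ is Calabi-Yau.

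There is essentially no technical obstacle beyond the formulas already established in Proposition~\ref{nakare}; the only subtle point worth flagging is the use of $\hdet(\nu)=1$, which is what makes the homological determinant condition disappear and leaves $\theta=\nu$ as the single necessary and sufficient condition. This is the analogue, in the one-variable case, of how condition (2) of Theorem~\ref{nakab} becomes vacuous once condition (1) is imposed in the self-dual situations arising from a Koszul AS-regular base.
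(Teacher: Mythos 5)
Your proposal is correct and follows essentially the same route as the paper: the paper obtains Theorem~\ref{orecy} directly from the Nakayama automorphism formula of Proposition~\ref{nakare} together with the fact that $\hdet(\nu)=1$ for a Koszul AS-regular algebra (\cite[Theorem 0.4]{RRZ13}), exactly as you do. The only difference is that the paper leaves the two-line deduction implicit, whereas you have written it out, including the correct observations that $D$ is homologically smooth and that inner automorphisms of a connected graded algebra are trivial.
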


\section{Skew Laurent Extensions}
In this section, we consider the Calabi-Yau property of the Ore localizations of both $A[t; \theta]$ and  $A_P[y_1, y_2; \sigma]$ with some conditions. For a skew polynomial extension $A[t; \theta]$ of an algebra $A$, the multiplicatively closed set $\{t^i; i\in \mathbb{N}\}$
is an Ore set. The localization of $A[t; \theta]$ with respect to this Ore set is just the skew Laurent polynomial extension $A[t^{\pm 1}; \theta]$.  Farinati proposed a general notion of a noncommutative localization in  \cite{F05}. It was  proved there that the Van den Bergh duality is preserved by such a localization and the corresponding dualizing module is also explicitly described. The Ore localization is an example of a noncommutative localization \cite[Example 8]{F05}.

\begin{proposition} \label{nakal} Suppose that
$A$ is a Koszul AS-regular algebra of dimension $d$ and  $D=A[t; \theta]$
is a skew polynomial extension of $A$. Then, the Nakayama automorphism $\widetilde{\nu}$ of $A[t^{\pm 1}; \theta]$
is given by
$$\widetilde{\nu}(x)=\begin{cases} \nu_D(x), & x \in D
  \\
 \frac{1}{\hdet\theta} x, & x=t^{-1}.
\end{cases}$$
\end{proposition}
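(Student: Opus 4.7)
The plan is to apply Farinati's theorem on the preservation of the Van den Bergh duality under Ore localization \cite[Theorem 6, Example 8]{F05}, combined with the explicit Nakayama automorphism of $D = A[t;\theta]$ computed in Proposition \ref{nakare}. Since $S = \{t^i : i \geq 0\}$ is an Ore set in $D$ and the corresponding localization is exactly $Q := A[t^{\pm 1};\theta]$, Farinati's result guarantees that $Q$ is homologically smooth and twisted Calabi-Yau of the same dimension $d+1$ as $D$, with a dualizing bimodule obtained from that of $D$ by localization on both sides.

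First I would construct a candidate lift of $\nu_D$ to $Q$. By Proposition \ref{nakare}, $\nu_D(t) = (\hdet\theta)\,t$ is a unit in $Q$, so the universal property of Ore localization produces a unique algebra automorphism $\widetilde{\nu}$ of $Q$ restricting to $\nu_D$ on $D$. Applying $\widetilde{\nu}$ to the identity $t\cdot t^{-1}=1$ then forces $\widetilde{\nu}(t^{-1}) = \widetilde{\nu}(t)^{-1} = (\hdet\theta)^{-1}\,t^{-1}$, which is exactly the formula asserted in the statement.

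The next step is to verify that this $\widetilde{\nu}$ is indeed \emph{the} Nakayama automorphism of $Q$. Farinati's description of the dualizing bimodule gives a $Q$-bimodule isomorphism
\[
\Ext^{d+1}_{Q^e}(Q, Q^e) \;\cong\; Q \otimes_D \Ext^{d+1}_{D^e}(D, D^e) \otimes_D Q \;\cong\; Q \otimes_D {}^{1}\!D^{\nu_D}\otimes_D Q,
\]
up to degree shift, where we have used $\Ext^{d+1}_{D^e}(D, D^e) \cong {}^{1}\!D^{\nu_D}$ from the twisted Calabi-Yau property of $D$. One then checks that the map $q_1\otimes r\otimes q_2 \mapsto q_1 r\,\widetilde{\nu}(q_2)$ is a well-defined $Q^e$-isomorphism onto ${}^{1}\!Q^{\widetilde{\nu}}$, using crucially that $\widetilde{\nu}$ extends $\nu_D$.

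The main obstacle will be the bookkeeping of the bimodule localization: one must confirm that the twisted $D$-actions on ${}^{1}\!D^{\nu_D}$ extend compatibly to $Q$-actions on both sides. This ultimately reduces to the Ore compatibility $\nu_D(S)\subseteq S\cdot\mathbbm{k}^{\times}$, which holds because $\nu_D(t^i) = (\hdet\theta)^i t^i$. Once this compatibility is in place, the identification of the localized twisted bimodule with ${}^{1}\!Q^{\widetilde{\nu}}$ is a direct computation, and the proposition follows.
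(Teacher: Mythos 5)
Your proposal is correct and follows the same route as the paper: the paper's proof likewise invokes Farinati's Theorem 6 to get $\Ext^{d+1}_{E^e}(E,E^e)\cong E\otimes_D D^{\nu}\otimes_D E$ for $E=A[t^{\pm 1};\theta]$ and then reads off $\widetilde{\nu}$ from Proposition \ref{nakare}. You have merely made explicit the steps the paper leaves implicit (the unique extension of $\nu_D$ to the localization via the universal property, and the identification of the localized twisted bimodule with ${}^{1}E^{\widetilde{\nu}}$), which is a faithful elaboration rather than a different argument.
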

\begin{proof} By assumption and \cite[Theorem 6]{F05}, we have:
$$\Ext^i_{E^e}(E, E^e)\cong\begin{cases}0, & i \neq d+1
  \\
  E\otimes_DD^{\nu}\otimes_DE(d+1), & i=d+1,
\end{cases}$$
where $E$ stands for the algebra $A[t^{\pm 1}; \theta]$. Thus, the claim follows from the description of the Nakayama automorphism of $D$ in Proposition \ref{nakare}.
\end{proof}

\begin{theorem} \label{skewl} Suppose that $A$ is a Koszul AS-regular algebra with  Nakayama automorphism $\nu$ and $\theta$ is a graded algebra automorphism of $A$.
Then, $A[t^{\pm 1}; \theta]$
is graded Calabi-Yau if and only if there exists an integer $n$ such that
$\theta^n=\nu$ and the homological determinant of $\theta$ equals $1$.
\end{theorem}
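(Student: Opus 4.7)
The plan is to apply Proposition \ref{nakal} to obtain the Nakayama automorphism $\widetilde{\nu}$ of $E := A[t^{\pm 1}; \theta]$ explicitly, and then translate the Calabi-Yau condition ``$\widetilde{\nu}$ is inner'' into the two conditions on $\theta$. Combining Proposition \ref{nakal} with Proposition \ref{nakare}, I get
\[
\widetilde{\nu}|_A = \theta^{-1}\circ \nu, \qquad \widetilde{\nu}(t) = (\hdet\theta)\,t, \qquad \widetilde{\nu}(t^{-1}) = (\hdet\theta)^{-1}\,t^{-1}.
\]
Since $E$ is $\mathbb{Z}$-graded with $\deg t = 1$ and $\widetilde{\nu}$ is a graded automorphism, the graded Calabi-Yau property is equivalent to $\widetilde{\nu}$ being inner by a homogeneous unit of $E$.

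The next step is to pin down the homogeneous units of $E$. I would argue that they form the group $\{\lambda t^m : \lambda \in \mathbbm{k}^\times,\; m \in \mathbb{Z}\}$. For this, take a homogeneous unit $u \in E_m$ with inverse $v \in E_{-m}$, write $u = \sum c_i t^i$ and $v = \sum d_j t^j$ (finite sums, with $c_i, d_j$ homogeneous in $A$), and use that $A$ is a graded domain (being Koszul AS-regular of positive global dimension) to conclude from $uv = 1$ that both $u$ and $v$ must be single-term monomials. A degree comparison and the fact that the only homogeneous units in $A$ are scalars then forces $u = \lambda t^m$.

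With this in hand, conjugation by $\lambda t^m$ acts on $A$ as $\theta^m$ and sends $t \mapsto t$. Equating this with $\widetilde{\nu}$ yields $\theta^m = \theta^{-1}\circ\nu$ (that is, $\theta^{m+1} = \nu$) and $\hdet\theta = 1$; setting $n = m+1$ recovers the stated conditions, giving the forward direction. Conversely, if $\theta^n = \nu$ and $\hdet\theta = 1$, then conjugation by $t^{\,n-1}$ coincides with $\widetilde{\nu}$ on both $A$ and on $t$, so $\widetilde{\nu}$ is inner and $E$ is graded Calabi-Yau.

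The main obstacle I expect is the identification of the homogeneous units of $E$, since $E$ is not connected graded (it has nonzero components in negative degrees and even $E_0$ contains elements like $a t^{-n}$ with $a \in A_n$). I would resolve this by the domain argument sketched above, which short-circuits the need to analyse $E_0$ directly. Once the structure of homogeneous units is settled, the rest of the argument is a direct translation through the explicit formula for $\widetilde{\nu}$.
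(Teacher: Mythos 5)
Your proposal is correct and follows essentially the same route as the paper: combine Propositions \ref{nakare} and \ref{nakal} to get $\widetilde{\nu}$ explicitly, then identify the Calabi-Yau condition with $\widetilde{\nu}$ being conjugation by a power of $t$. The only difference is that you justify more carefully why the relevant inner automorphism must be conjugation by some $\lambda t^m$ (via homogeneous units and the domain property), whereas the paper merely remarks that the units of $\mathbbm{k}[t^{\pm 1}]$ are monomials; if anything, your version fills in a small gap in the published argument.
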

\begin{proof} It follows from the proof of \cite[Theorem 6]{F05} that $A[t^{\pm 1}; \theta]$ is homologically smooth. Thus, the proof focuses on the description of the Nakayama automorphisms of algebras $A[t; \theta]$ and $A[t^{\pm 1}; \theta]$  as showed in Proposition \ref{nakare} and Proposition \ref{nakal} respectively.  Note that the only invertible elements in $\mathbbm{k}[t^{\pm 1}]$
are monomials. Suppose that  $A[t^{\pm 1}; \theta]$
is Calabi-Yau. Then, its Nakayama automorphism $\widetilde{\nu}$ is inner, i.e., there exists an integer
$n\in \mathbb{Z}$ such that  $\widetilde{\nu}(x)=t^nxt^{-n}$ for each $x\in A[t^{\pm 1}; \theta]$.
In particular, $\widetilde{\nu}(t)=t$. Therefore, $\hdet(\theta)=1$ by Proposition \ref{nakare}. If $n$ is nonnegative, then for each $x\in A$ we have
\begin{align*} \widetilde{\nu}(x)&=\theta^{-1}\nu(x)=t^nxt^{-n}\\
&=t^n(t^{-1}\theta(x)t)t^{-n}\\
&=t^{n-1}\theta(x)t^{1-n}\\
&=\cdots=\theta^n(x).
\end{align*}
Hence, $\nu(x)=\theta^{n+1}(x)$.
Similarly, the claim also holds for the case when $n$ is a negative integer.

Conversely, if $\theta^n=\nu$ for some integer $n$ and the homological determinant of $\sigma$ equals $1$,
then $\widetilde{\nu}(t)=t$. Next, for each $x\in A$, we have
$$\widetilde{\nu}(x)=\theta^{-1}\nu(x)=\theta^{n-1}(x).$$
But in $A[t^{\pm 1}; \theta]$, $\theta(x)=txt^{-1}$. That is, both $\theta$ and its inverse are inner.
Therefore, $\widetilde{\nu}$ is an inner automorphism. The proof is completed.
\end{proof}

\begin{example} Let $A=\mathbbm{k}\langle x, y\rangle/(yx-xy-x^2)$ be the Jordan plane. It is a twisted Calabi-Yau
algebra of dimension $2$ whose Nakayama automorphism $\nu$ is given by $\nu(x)=x$ and $\nu(y)=2x+y$. Then, $A[t; \theta]$ is Calabi-Yau if and only if $\theta=\nu$ by Theorem \ref{orecy}. It is not hard to see that each graded automorphism $\theta$ of $A$ has the form $\theta(x)=ax$ and $\theta(y)=bx+ay$
for some $a, b\in \mathbbm{k}$. By Proposition \ref{hdk}, the homological determinant of $\theta$ is equal to $a^2$. Thus, $A[t^{\pm 1}; \theta]$ is Calabi-Yau if and only if $\theta$ is either given by
$$\begin{cases}\theta(x)=x
  \\
  \theta(y)=\frac{2}{n}x+y
\end{cases}$$ for some nonzero integer $n$, or given by $$\begin{cases}\theta(x)=-x
  \\
  \theta(y)=\frac{2}{n}x-y
\end{cases}$$ for some even integer $n$.
\end{example}

Finally, we consider  the localization or the quotient ring of the double Ore extension $B$ with respect to
the Ore set generated by new generators. However, we can only do this in some special case as follows.

\begin{proposition} \label{iterated} Let $B=A_P[y_1, y_2; \sigma]$ be a trimmed double Ore extension
with $P=(p, 0)$ and $
\sigma=\begin{pmatrix} \tau & 0 \\ 0 & \xi
\end{pmatrix}$.
Then,
\begin{enumerate}
\item [(1)]
Both $\tau$ and $\xi$ are automorphisms of $A$. Moreover, they commutate with each other.

\item [(2)] The multiplicatively closed set $S:=\{ay_1^{n_1}y_2^{n_2}; a\in k, n_1, n_2\in \mathbb{Z}_{\geq 0}\}$ is
an Ore set.

\item [(3)] The quotient ring $B_S$ of $B$ with respect  to $S$ exists.
\end{enumerate}
\end{proposition}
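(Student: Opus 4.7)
The plan is to exploit the fact that when $\sigma=\text{diag}(\tau,\xi)$ and $P=(p,0)$, the defining relations of $B$ decouple into $y_1 a = \tau(a) y_1$, $y_2 a = \xi(a) y_2$, and $y_2 y_1 = p y_1 y_2$. This gives $B$ the flavour of an iterated skew polynomial ring and, crucially, makes every element of $S$ a normal element of $B$. From there the Ore conditions and the existence of $B_S$ fall out by standard arguments.

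For (1), I will deduce that $\tau$ and $\xi$ are algebra automorphisms from the invertibility of $\sigma$ guaranteed by Lemma \ref{doeinv}(1): writing out Definition \ref{inverse} for a diagonal $\sigma$ forces the inverse $\phi$ to also be diagonal, and its diagonal entries furnish two-sided inverses to $\tau$ and $\xi$. For commutativity, I will apply the relation $y_2 y_1 = p y_1 y_2$ to an arbitrary $a\in A$ in two ways:
\[
y_2 y_1 a = \xi\tau(a) \cdot y_2 y_1 = p\,\xi\tau(a)\, y_1 y_2,\qquad p y_1 y_2 a = p\,\tau\xi(a)\, y_1 y_2,
\]
and conclude $\tau\xi=\xi\tau$ using $p\neq 0$ together with the freeness of $B$ over $A$ on the basis $\{y_1^i y_2^j\}$.

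For (2), I will prove that every $s=y_1^{n_1}y_2^{n_2}\in S$ is a normal element of $B$, i.e.\ $sB=Bs$. Using the iterated identities $s\,a = \tau^{n_1}\xi^{n_2}(a)\,s$ (which requires $\tau\xi=\xi\tau$ from (1)), $y_2^{n_2} y_1 = p^{n_2} y_1 y_2^{n_2}$ and $y_1^{n_1} y_2 = p^{-n_1} y_2 y_1^{n_1}$, I will check that the assignment $a\mapsto \tau^{n_1}\xi^{n_2}(a)$, $y_1\mapsto p^{n_2} y_1$, $y_2\mapsto p^{-n_1} y_2$ extends to a graded algebra automorphism $\psi_s$ of $B$ satisfying $s\,b=\psi_s(b)\,s$ for every $b\in B$. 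The well-definedness reduces to the relations of $A$ (preserved because $\tau^{n_1}\xi^{n_2}\in\Aut(A)$), to $y_2 y_1 = p y_1 y_2$ (where the scalars $p^{n_2}$ and $p^{-n_1}$ cancel correctly), and to $y_j a - \sigma_{jk}(a) y_k$ (where the commutativity $\tau\xi=\xi\tau$ is what makes it go through). Normality of every element of $S$ immediately yields both the left and right Ore conditions.

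For (3), given the Ore condition from (2), the existence of $B_S$ reduces to verifying that each $s\in S$ is a regular element of $B$, since reversibility of the Ore set is then automatic. This follows from the last lemma of Section~1 (cited from \cite{ZZ08}): $B$ is a connected graded AS-regular algebra, hence a Noetherian domain by classical results, so a product of the nonzero elements $y_1$ and $y_2$ is again regular. The main obstacle in the plan is the careful verification in (2) that $\psi_s$ respects all defining relations of $B$; this is mechanical but needs attention to keep the scalar powers of $p$ and the automorphism $\tau^{n_1}\xi^{n_2}$ aligned correctly, and it is the place where the hypotheses $P=(p,0)$ (so that no $qy_1^2$ term spoils normality) and $\sigma$ diagonal (so that normality propagates through both $y_1$ and $y_2$) are genuinely used.
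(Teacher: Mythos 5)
Your proposal is correct and, for part (1), reaches the same conclusions as the paper but by a partly different route; it also supplies the details for (2) and (3) that the paper dismisses with ``the rest is easy.'' For the commutativity $\tau\xi=\xi\tau$ the paper simply observes that the two expressions for $\det_r\sigma$ --- the defining one \eqref{detdef} and the alternative one in Proposition \ref{determ}(2), valid for $p\neq 0$ --- collapse to $\xi\circ\tau$ and $\tau\circ\xi$ respectively when $\sigma$ is diagonal and $q=0$. Your direct computation with $y_2y_1a$ is exactly the elementary manipulation underlying those formulas, and it has the virtue of making visible where $p\neq 0$ and the left-$A$-freeness of $B$ on $\{y_1^iy_2^j\}$ are used; you should note, as the paper's appeal to Proposition \ref{determ}(2) also silently requires, that $p\neq 0$ is forced for a two-sided double Ore extension (if $p=0$ then $y_2y_1=qy_1^2$ together with the left double Ore condition contradicts the freeness on $\{y_1^iy_2^j\}$). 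Your normality argument via $\psi_s$ in (2) is the standard and correct way to obtain both Ore conditions. The one point to repair is the justification of regularity in (3): it is not a ``classical result'' that a connected graded AS-regular algebra is a Noetherian domain --- this is known in low dimensions but open in general --- and the proposition as stated does not even formally assume $A$ is AS-regular. Fortunately you do not need it: left and right multiplication by $y_1^{n_1}y_2^{n_2}$ are injective directly from condition (iii) of Definition \ref{def} (freeness of $B$ as a left, resp.\ right, $A$-module on the monomials in $y_1,y_2$), together with $p\neq 0$ and the injectivity of $\tau$ and $\xi$ from part (1); hence every element of $S$ is regular and the Ore localization $B_S$ exists.
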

\begin{proof} Since $B$ is a trimmed double Ore extension of $A$, $\sigma$ is invertible according to Lemma \ref{doeinv}. Hence, both $\tau$ and $\xi$ are automorphisms of $A$. By the definition of the right determinant of $\sigma$ (see \eqref{detdef}) and its equivalent description in Proposition \ref{determ}, we have $\tau\xi=\xi\tau$.  The rest of the proof is straightforward.
\end{proof}

In fact, the algebra $B=A_P[y_1, y_2; \sigma]$ considered above is an iterated skew polynomial extension $A[y_1; \tau][y_2; \xi^\prime]$ where
$\xi^\prime$ is the automorphism of  $A[y_1; \tau]$ defined as follows
$$\xi^\prime(x)=\begin{cases} \xi(x), & x \in A;
  \\
 px, & x=y_1.
\end{cases}$$
If $p=1$, then the quotient ring $B_S$ is isomorphic to the iterated skew Laurent ring $A[y_1^{\pm 1}, y_2^{\pm 1}; \tau, \xi]$ (see \cite[p.23-24]{GW04}). In the case of $p\neq 1$, we can also construct the iterated skew Laurent ring, denoted $A_P[y_1^{\pm 1}, y_2^{\pm 1}; \tau, \xi]$ or just $A_P[y_1^{\pm 1}, y_2^{\pm 1}; \sigma]$. Similarly, the quotient ring $B_S$ in the above Proposition is isomorphic to the iterated skew Laurent ring $A_P[y_1^{\pm 1}, y_2^{\pm 1}; \sigma]$

\begin{theorem}  \label{skew2} Suppose that $A$ is a
Koszul AS-regular algebra with Nakayama automorphism $\nu$,  $B=A_P[y_1, y_2; \sigma]$ is a trimmed double Ore extension
with $P=(p, 0)$ and $
\sigma=\begin{pmatrix} \tau & 0 \\ 0 & \xi
\end{pmatrix}$ and $B_S=A_P[y_1^{\pm 1}, y_2^{\pm 1}; \sigma]$.
Then, $B_S$
is Calabi-Yau if and only if there exist two integers  $m, n$ such that the following conditions are satisfied:
\begin{enumerate}
\item [(1)]
 $\tau^n\xi^m=\nu$;

\item [(2)] $\hdet(\tau)=p^{m}$ and $\hdet(\xi)=1/p^{n}$.
\end{enumerate}
\end{theorem}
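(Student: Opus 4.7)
The plan parallels Theorem~\ref{skewl}: first compute the Nakayama automorphism $\widetilde{\nu}$ of $B_S$ explicitly, then characterize when it is inner. By Farinati's localization theorem \cite[Theorem 6]{F05} applied to the Ore localization $B \subset B_S$, the algebra $B_S$ is homologically smooth and $\widetilde{\nu}$ is the unique algebra automorphism of $B_S$ extending $\nu_B$; thus it suffices to determine $\nu_B$ and then extend it to $y_1^{-1}, y_2^{-1}$.

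To compute $\nu_B$ I would apply Proposition~\ref{nakaab} with $P = (p, 0)$ and $\sigma = \mathrm{diag}(\tau, \xi)$. The dual $\sigma^{*}$ is again diagonal and, by Proposition~\ref{hdk}, $\tau^{*}(\delta) = \hdet(\tau)\,\delta$ and $\xi^{*}(\delta) = \hdet(\xi)\,\delta$; hence in the notation of \eqref{hdet} one has $W = \hdet(\tau)$, $Z = \hdet(\xi)$ and $X = Y = 0$, while a direct computation gives $\det_r\sigma = \tau\xi$. Substituting (with $q = 0$) into Proposition~\ref{nakaab} yields
\begin{equation*}
\nu_B|_A = (\tau\xi)^{-1}\nu,\quad \nu_B(y_1) = \frac{\hdet(\tau)}{p}\, y_1,\quad \nu_B(y_2) = p\,\hdet(\xi)\, y_2,
\end{equation*}
and consequently $\widetilde{\nu}(y_k^{-1}) = \widetilde{\nu}(y_k)^{-1}$ on $B_S$ for $k = 1, 2$.

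For the ``if'' direction, given integers $n, m$ satisfying (1) and (2), I set $u := y_1^{n-1} y_2^{m-1} \in B_S^{\times}$ and verify $\mathrm{ad}_u = \widetilde{\nu}$ by direct computation. Using $y_2 y_1 = p\, y_1 y_2$ and $y_k a = \sigma_{kk}(a) y_k$ for $a \in A$, conjugation by $u$ sends $a$ to $\tau^{n-1}\xi^{m-1}(a)$, $y_1$ to $p^{m-1} y_1$ and $y_2$ to $p^{1-n} y_2$; conditions (1) and (2) make these scalars coincide with $(\tau\xi)^{-1}\nu$, $\hdet(\tau)/p$ and $p\,\hdet(\xi)$, respectively.

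The ``only if'' direction is the main obstacle and hinges on identifying the units of $B_S$. Viewing $B_S = A[y_1^{\pm 1}; \tau][y_2^{\pm 1}; \xi']$ as an iterated skew Laurent extension of the connected graded domain $A$, a standard leading/trailing-coefficient argument (applied in two stages, as in the proof of Theorem~\ref{skewl}) should show that the units of $B_S$ are precisely the scalar monomials $\alpha\, y_1^n y_2^m$ with $\alpha \in \mathbbm{k}^{\times}$ and $n, m \in \mathbb{Z}$. Granting this, the hypothesis $\widetilde{\nu} = \mathrm{ad}_u$ gives $\widetilde{\nu} = \mathrm{ad}_{y_1^n y_2^m}$ for some integers $n, m$ (the scalar is irrelevant to conjugation). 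Matching $\mathrm{ad}_{y_1^n y_2^m}$ (which acts as $\tau^n \xi^m$ on $A$, multiplies $y_1$ by $p^m$ and multiplies $y_2$ by $p^{-n}$) against the explicit formula for $\widetilde{\nu}$ forces $\tau^{n+1}\xi^{m+1} = \nu$, $\hdet(\tau) = p^{m+1}$ and $\hdet(\xi) = 1/p^{n+1}$; relabelling $n \mapsto n - 1$, $m \mapsto m - 1$ recovers the stated conditions.
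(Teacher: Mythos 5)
Your proposal is correct and follows essentially the same route as the paper: compute $W=\hdet\tau$, $X=Y=0$, $Z=\hdet\xi$ and $\det_r\sigma=\tau\xi$, feed these into Proposition~\ref{nakaab}, extend via Farinati's localization theorem, and then match the resulting $\widetilde{\nu}$ against conjugation by a unit $y_1^ny_2^m$ (the paper likewise asserts, without detailed proof, that units of $B_S$ are scalar monomials). The only cosmetic difference is that you work out the ``if'' direction explicitly with $u=y_1^{n-1}y_2^{m-1}$ while the paper details the ``only if'' direction and leaves the converse as ``similar''; your index bookkeeping and the relabelling $n\mapsto n-1$, $m\mapsto m-1$ are consistent with the paper's $\nu=\tau^{n+1}\xi^{m+1}$.
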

\begin{proof} The homologically smoothness of $B_S$ also follows from the proof of \cite[Theorem 6]{F05}. Observe that for the given homomorphism $\sigma: A\to M_{2\times 2}(A)$, the induced algebra homomorphism $\sigma^*$ form $A^!$ to  $M_{2\times 2}(A^!)$ has the form $\begin{pmatrix} \tau^* & 0 \\ 0 & \xi^*
\end{pmatrix}$, where $\tau^*$ and $\xi^*$ are automorphisms of $A^!$ induced by $\tau$ and $\xi$ respectively. By Example \ref{exhdet} and  Proposition \ref{nakaab} we obtain that the Nakayama automorphism of $B$ is given as follows:
$$\nu_{B}(x)=\begin{cases} (\tau\xi)^{-1}\circ\nu(x), & x \in A;
  \\
 \frac{1}{p}(\hdet\tau)x, & x=y_1;\\
 p(\hdet\xi)x, & x=y_2.
\end{cases}$$
Thus, it follows from \cite[Theorem 6]{F05} that the Nakayama automorphism $\widetilde{\nu}$ of $B_S$
is given by
$$\widetilde{\nu}(x)=\begin{cases} \nu_B(x), & x \in B
  \\
 \frac{p}{\hdet\tau} x, & x=y_1^{-1}\\
  \frac{1}{p\hdet\xi} x, & x=y_2^{-1}
\end{cases}$$

Note that the only invertible elements in $B_S$
are monomials $a_{nm}y_1^ny_2^m$ for some $a_{nm}\in \mathbbm{k}$ and $n, m\in \mathbb{Z}$.
Suppose that  $B_S$
is Calabi-Yau. Then, its' Nakayama automorphism $\widetilde{\nu}$ is inner, i.e., there exists integer
$m, n\in \mathbb{Z}$ such that  $\widetilde{\nu}(x)=y_1^ny_2^mxy_2^{-m}y_1^{-n}$ for each $x\in B_S$.
In particular, $\widetilde{\nu}(y_1)=y_1^ny_2^my_1y_2^{-m}y_1^{-n}=\frac{1}{p}(\hdet\tau)y_1$. It follows that
$\hdet(\tau)=p^{m+1}$ since  $y_1$ and $y_2$ satisfy $y_2y_1=py_1y_2$.  Similarly, we  have
$\hdet(\xi)=1/p^{n+1}$. Now, without loss of generality,  we may assume that both $n$ and $m$ are nonnegative. For any element $x\in A$,  we have
\begin{align*}
(\tau\xi)^{-1}\circ\nu(x)&=\widetilde{\nu}(x)\\
&=y_1^ny_2^mxy_2^{-m}y_1^{-n}\\
&=y_1^ny_2^{m-1}\xi(x)y_2^{1-m}y_1^{-n}\\
&=\cdots\\
&=y_1^n\xi^m(x)y_1^{-n}\\
&=\cdots\\
&=\tau^n\xi^m(x).
\end{align*}
Hence, $\nu=\tau^{n+1}\xi^{m+1}$.

The proof of the converse is similar.
\end{proof}

In general,  if  $\theta_1, \cdots,  \theta_m$ are commuting graded automorphisms of $A$, one  can construct an iterated skew polynomial
extension $A[y_1, \cdots, y_m; \theta_1, \cdots, \theta_m]$ as follows.  Let $R_1=A[y_1; \theta_1]$. Then, extend $\theta_2$ to an algebra automorphism $\theta_2^\prime$ of
$R_1$ such that $\theta_2^\prime|_A=\theta_2$ and $\theta_2^\prime(y_1)=y_1$. Now let $R_2=A[y_1; \theta_1][y_2; \theta_2^\prime]$. In this way, one can construct $R_i$ for $i=1,2,\cdots m$, such that, for $i<m$,
$R_{i+1}=R_i[y_{i+1}, \theta_{i+1}^\prime]$ , where $\theta_{i+1}^\prime$ is the automorphism of $R_i$ satisfying $\theta_{i+1}^\prime|_A=\theta_{i+1}$ and $\theta_{i+1}^\prime(y_j)=y_j$ for $j=1, \cdots, i$. Finally, let $$R_m=A[y_1; \theta_1][y_2; \theta_2^\prime]\cdots[y_m; \theta_m^\prime].$$

In order to describe the basic data that determine $R_m$, one writes $R_m$ in a different way as follows:
$$R_m=A[y_1, \cdots, y_m; \theta_1, \cdots, \theta_m].$$
Note that $y_iy_j=y_jy_i$,  $y_ia=\theta_i(a)y_i$ for all $i, j$ and any $a\in A$.

Now, let $R=R_m$ for some positive $m$. The quotient ring $R_S$ of $R$ with respect to the multiplicatively closed set $S:=\{y_1^{n_1}\cdots y_m^{n_m}; n_1, \cdots, n_m\in \mathbb{Z}_{\geq 0}\}$ exists and is isomorphic to the iterated skew Laurent ring $A[y_1^{\pm 1}, \cdots, y_m^{\pm 1}; \theta_1, \cdots, \theta_m]$. For more details, we refer to \cite[p.23-24]{GW04}. In the following, we will give a criterion for such an iterated skew polynomial extension of a Koszul AS-regular algebra to be Calabi-Yau.

\begin{theorem} \label{cyite} Suppose that $A$ is a Koszul AS-regular algebra with Nakayama automorphism $\nu$, $R=A[y_1, \cdots, y_m; \theta_1, \cdots, \theta_m]$ and $S:=\{y_1^{n_1}\cdots y_m^{n_m}; n_1, \cdots, n_m\in \mathbb{Z}_{\geq 0}\}$.
Then,
\begin{enumerate}
\item  the Nakayama automorphism $\nu_R$ of $R$ is given by
$$\nu_{R}(x)=\begin{cases} (\theta_1\circ\cdots\circ\theta_m)^{-1}\circ\nu(x), & x \in A
  \\
 (\hdet\theta_i)x, & x=y_i, 1\leq i\leq m;
\end{cases}$$
\item  $R$ is Calabi-Yau if and only if $\theta_1\circ\cdots\circ\theta_m=\nu$ and $\hdet\theta_i=1$ for  all $i$;
\item  $R_S$ is Calabi-Yau if and only if \\
$\mathrm{(i)}$ $\hdet(\theta_i)=1$ for all $i$, and\\
$\mathrm{(ii)}$ there exist  integers   $k_1, \cdots, k_m$ such that
 $\theta_1^{k_1}\cdots\theta_m^{k_m}=\nu$.
\end{enumerate}
\end{theorem}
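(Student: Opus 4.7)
The plan is to establish (1) by induction on $m$, piggybacking on Proposition \ref{nakare} at each stage, and then to deduce (2) directly and (3) by applying the Farinati localization argument already used in Proposition \ref{nakal} and Theorem \ref{skewl}. The first thing to observe is that each intermediate algebra $R_i = A[y_1, \ldots, y_i; \theta_1, \ldots, \theta_i]$ is again a Koszul AS-regular algebra (the Koszulity comes from Proposition \ref{koszulity} or the classical result for Ore extensions with automorphism and $\delta = 0$, and AS-regularity is preserved since $\gldim R_i = \gldim A + i$), so Proposition \ref{nakare} applies at every step.

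For the inductive step of (1), write $R_m = R_{m-1}[y_m; \theta_m']$ where $\theta_m'$ extends $\theta_m$ by fixing each $y_j$ for $j<m$. Proposition \ref{nakare} then yields
\begin{equation*}
\nu_{R_m}(x) = \begin{cases} (\theta_m')^{-1}\circ \nu_{R_{m-1}}(x), & x \in R_{m-1},\\ (\hdet \theta_m')\, x, & x = y_m.\end{cases}
\end{equation*}
To finish, I need $\hdet \theta_m' = \hdet \theta_m$. By Proposition \ref{hdk}, the homological determinant is read off from the action on the one-dimensional top component of $R_{m-1}^!$. Using Lemma \ref{exts}-style bookkeeping (or the description of $R_{m-1}^!$ as a free $A^!$-module over $\{1, y_j^*, y_j^*y_k^*, \ldots\}$), the top-degree element of $R_{m-1}^!$ is $\delta\, y_1^*\cdots y_{m-1}^*$ up to scalar; since $(\theta_m')^*$ fixes every $y_j^*$ and acts as $\theta_m^*$ on $A^!$, it scales this element by exactly $\hdet\theta_m$. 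Combining with the inductive formula for $\nu_{R_{m-1}}$ and using $(\theta_m')^{-1}(y_j) = y_j$ for $j<m$ gives the claimed formula for $\nu_{R_m}$.

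Part (2) is immediate: $R$ is connected graded, hence any inner automorphism of $R$ is the identity, so $R$ is Calabi-Yau iff $\nu_R = \id$, which by (1) translates exactly into $\theta_1\cdots\theta_m = \nu$ and $\hdet\theta_i = 1$ for every $i$. For (3), apply Farinati's theorem \cite[Theorem 6]{F05} exactly as in Proposition \ref{nakal}: the Nakayama automorphism $\widetilde{\nu}$ of $R_S$ agrees with $\nu_R$ on $R$ and sends $y_i^{-1}$ to $(\hdet\theta_i)^{-1}\, y_i^{-1}$. The only invertible elements of $R_S$ are scalar multiples of monomials $y_1^{k_1}\cdots y_m^{k_m}$, so $\widetilde{\nu}$ is inner iff there exist integers $k_1,\ldots,k_m$ with $\widetilde{\nu}(x) = y_1^{k_1}\cdots y_m^{k_m}\, x\, y_m^{-k_m}\cdots y_1^{-k_1}$ for all $x$. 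Testing on $x = y_i$ (which is centralized, since the $y_j$ commute pairwise) gives $\hdet\theta_i = 1$, and testing on $x\in A$ yields $(\theta_1\cdots\theta_m)^{-1}\nu = \theta_1^{k_1}\cdots\theta_m^{k_m}$, i.e.\ $\nu = \theta_1^{k_1+1}\cdots\theta_m^{k_m+1}$; relabelling the exponents gives condition (ii). The converse is then a direct verification along the same lines as the converse direction in Theorem \ref{skewl}.

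The main technical obstacle I anticipate is the computation $\hdet \theta_m' = \hdet \theta_m$ in the induction, which requires a careful identification of the top-degree component of $R_{m-1}^!$; once the Koszul-dual description of $R_{m-1}$ is in place (as in the Lemma \ref{lem5a}--Proposition \ref{basis} analysis), the rest is essentially bookkeeping analogous to the proofs of Proposition \ref{nakal} and Theorem \ref{skewl}.
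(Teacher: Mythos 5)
Your proposal is correct and follows the same overall skeleton as the paper: induction on $m$ via Proposition \ref{nakare}, part (2) from the triviality of inner automorphisms of a connected graded algebra, and part (3) via Farinati's localization theorem with the inner automorphism tested on the $y_i$ and on $A$, exactly as in Theorem \ref{skewl} and Theorem \ref{skew2}. The one place where you genuinely diverge is the key identity $\hdet\theta_i'=\hdet\theta_i$. The paper obtains it indirectly: it views $R_2=A[y_1;\theta_1][y_2;\theta_2']$ as the trimmed double Ore extension $A_P[y_1,y_2;\mathrm{diag}(\theta_1,\theta_2)]$ with $P=(1,0)$, reads off $\nu_{R_2}(y_2)=(\hdet\theta_2)y_2$ from the computation in Theorem \ref{skew2} (i.e.\ Proposition \ref{nakaab}), and compares with the Ore-extension formula $\nu_{R_2}(y_2)=(\hdet\theta_2')y_2$; it then says ``repeating this process,'' which at stage $i>2$ requires regrouping two adjacent variables into a double Ore extension over $R_{i-2}$. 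You instead compute $\hdet\theta_i'$ directly from Proposition \ref{hdk}: since $(\theta_i')^*$ fixes each $y_j^*$ and acts as $\theta_i^*$ on $A^!$, it scales the one-dimensional top component $\mathbbm{k}\,\delta y_1^*\cdots y_{i-1}^*$ of $R_{i-1}^!$ by $\hdet\theta_i$. Your route costs a little extra bookkeeping (one must justify the free-module description of $R_{i-1}^!$ over $A^!$, which follows by iterating the \cite{LSV96} argument used in Proposition \ref{nakare}), but it buys a uniform and self-contained justification of every inductive step, whereas the paper's cross-reference is shorter given the machinery already assembled but leaves the general step of the iteration slightly informal. Both arguments are valid.
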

\begin{proof} It is well-known that a  skew polynomial extension of a Koszul algebra is again Koszul, c.f. \cite[Corollary 1.3]{Ph12}).  So both $R$ and $R_S$ are homologically smooth.  By Proposition \ref{nakare},
the  Nakayama automorphism $\nu_{R_2}$ of
$R_2$  is given by
$$\nu_{R_2}(x)=\begin{cases} (\theta_2^\prime)^{-1}\circ\nu_{R_1}(x), & x \in R_1
  \\
 (\hdet\theta_2^\prime)x, & x=y_2.
\end{cases}$$
It follows from the construction of $\theta_2^\prime$ and the description of the Nakayama automorphism $\nu_{R_1}$ of
$R_1$ that
$$\nu_{R_2}(x)=\begin{cases} (\theta_2\theta_1)^{-1}\circ\nu(x), & x \in A;
  \\
 (\hdet\theta_1)x, & x=y_1;\\
 (\hdet\theta_2^\prime)x, & x=y_2.
\end{cases}$$
On the other hand, according to the proof of Theorem \ref{skew2},
 $\nu_{R_2}(y_2)=(\hdet\theta_2)y_2$.
Hence, $\hdet\theta_2^\prime=\hdet\theta_2$. Repeating this process, we obtain Part (1).  Part (2) follows from Part (1).
The proof of Part (3) is similar to the proof of Theorem \ref{skew2}.
\end{proof}

Note that a typical example of $R_S$ is the smash product of a Koszul AS-regular algebra with a free abelian group algebra.
For example, those Hopf  algebras in the classification of Calabi-Yau pointed Hopf algebras of finite Cartan type in \cite{YZ11}.



\section*{Acknowledgments}
This work is supported by Natural Science Foundation of China \#11201299 and by an FWO grant.

\bibliography{}

\end{document}